\newcommand{\be}{\begin{equation}}
\newcommand{\ee}{\end{equation}}
\newtheorem{prop}{Proposition}[section]
\newcommand{\sn}{{\rm \,sn}}
\newcommand{\dn}{{\rm \,dn}}
\newcommand{\Ker}{{\rm \,Ker}}
\numberwithin{equation}{section}
\numberwithin{figure}{section}
\newtheorem{theorem}{Theorem}[section]
\newtheorem{proposition}[theorem]{Proposition}
\newtheorem{remark}[theorem]{Remark}
\newtheorem{lemma}[theorem]{Lemma}
\newtheorem{definition}[theorem]{Definition}
\begin{document}
	%\color{red}
\vglue-1cm \hskip1cm
\title[Zero-mean periodic waves for the $\phi^4-$equation]{On the orbital stability of periodic snoidal waves for the $\phi^4-$equation}

\begin{center}

\keywords{$\phi^4-$equation, global well-posedness, periodic waves, orbital stability.}
\thanks{B.S. Lonardoni is supported by CAPES/Brazil - Finance Code 001. F. Natali is partially supported by CNPq/Brazil (grant 303907/2021-5).}

\maketitle

{\bf Beatriz Signori Lonardoni}

{Departamento de Matem\'atica - Universidade Estadual de Maringá\\
	Avenida Colombo, 5790, CEP 87020-900, Maring\'a, PR, Brazil.}\\
{ beatrizslonardoni@gmail.com}

{\bf F\'abio Natali}

{Departamento de Matem\'atica - Universidade Estadual de Maring\'a\\
	Avenida Colombo, 5790, CEP 87020-900, Maring\'a, PR, Brazil.}\\
{ fmanatali@uem.br}

\vspace{3mm}

\end{center}

\begin{abstract}

The main purpose of this paper is to investigate the global well-posedness and orbital stability of odd periodic traveling waves for the $\phi^4-$equation in the Sobolev space of periodic functions with zero mean. We establish new results on the global well-posedness of weak solutions by combining a semigroup approach with energy estimates. As a consequence, we prove the orbital stability of odd periodic waves by applying a Morse index theorem to the constrained linearized operator defined in the Sobolev space with the zero-mean property.

\end{abstract}

\section{Introduction} 

Consider the well-known $\phi^4-$equation
\begin{equation}\label{KF2}
\phi_{tt}-\phi_{xx}-\phi+\phi^3=0,
\end{equation}
where $\phi:\mathbb{R} \times \mathbb{R}_+ \rightarrow \mathbb{R}$ is an $L$-periodic function at the spatial variable. This means that we have $\phi(x+L,t)=\phi(x,t)$ for all $t\geq0$. In a convenient scenario, equation $(\ref{KF2})$ is a typical Klein-Gordon equation with non-negative energy and it plays an important role in nuclear and particle physics. From a mathematical point of view, the $\phi^4-$equation supports kink and anti-kink solutions.  An important feature of these waves is that they are stable, localized solutions that model domain walls, phase transitions, and nonlinear wave propagation.\\
\indent Equation \eqref{KF2} has an abstract Hamiltonian system form\begin{equation}\label{hamilt31}
	\frac{d}{dt}\Phi(t)=J \mathcal{E}'(\Phi(t)),
\end{equation}   where $\Phi=(\phi,\phi_t)$ and $J$ is given by
\begin{equation}\label{J}
J=\left(
\begin{array}{cc}
 0 & 1\\
-1 & 0\end{array}
\right).
\end{equation}
 Now, if $Z=H^1_{per}\times L^2_{per} $, we see that $\mathcal{E}':Z\rightarrow Z'$ denotes the Fr\'echet derivative of the conserved quantity (energy) $\mathcal{E}:Z\rightarrow \mathbb{R} $ given by
\begin{equation}\label{E}
\mathcal{E}(\phi, \phi_t)
=\displaystyle\frac{1}{2}\int_{0}^{L} \left[\phi_x^2+\phi_t^2-\phi^2+\frac{\phi^4}{2}\right] \;dx.
\end{equation}
 Moreover, \eqref{KF2} has another conserved quantity defined in $Z$ given by 
\begin{equation}\label{F}
	\mathcal{F}(\phi,\phi_t)=\displaystyle\int_{0}^{L}\phi_x \phi_t \; dx.
\end{equation}

A fundamental property associated with the equation \eqref{KF2} is the existence of kink, anti-kink and periodic traveling wave solutions of the form
\begin{equation}\label{PW}
\phi(x,t)=h(x-ct),
\end{equation}
where $c \in\mathbb{R}$ represents the wave speed and $h=h_c: \mathbb{R}\rightarrow \mathbb{R}$ is an $L$-periodic  smooth function.

\indent In our paper, we consider the case where the solution $h$ is odd. In fact, substituting \eqref{PW} into \eqref{KF2}, it follows that $h$ satisfies the following ODE 
\begin{equation}\label{KF3}
-\omega h''-h+h^3=0,
\end{equation}
where $\omega=1-c^2$ is assumed to be non-negative, which implies $c\in(-1,1)$. First, we have the existence of kink solution associated with the equation $(\ref{KF3})$, given by 
\begin{equation}\label{Sol1}
	h(x)= \tanh  \left(\frac{x}{\sqrt{2\omega}}\right).
\end{equation} 
The anti-kink solution is given by $h(x)=-\tanh  \left(\frac{x}{\sqrt{2\omega}}\right)$. In the periodic context, one can find an explicit solution depending on the Jacobi elliptic function of snoidal type as
\begin{equation}\label{Sol2}
h(x)=\frac{\sqrt{2k}}{\sqrt{k^2+1}} \sn \left(\frac{4K(k)}{L}\;x;k \right),
\end{equation} 
where $k\in\left(0,1\right)$ is called the modulus of the elliptic function and $$K(k)=\displaystyle\int_{0}^{\frac{\pi}{2}}\frac{1}{\sqrt{1-k^2\sin^2(\theta)}}d\theta$$ is the complete  elliptic integral of the first kind.

The value of $\omega$ depends on $k$ and $L$ and it is expressed by    \begin{equation}\label{ccn}
	\frac{1}{\sqrt{\omega}}=\frac{4K(k)\sqrt{1+k^2}}{L}.\end{equation}
By assuming that $0<L<2\pi$, we obtain from \eqref{ccn} that $0<\omega<1$ and the modulus $k$ varies over the  open interval $\left(0,1\right)$. It is important to mention that the periodic wave in $(\ref{Sol2})$ is odd and, therefore, possesses the zero-mean property. In addition, supposing that $\phi \in H_{per,m}^1$ (the space of functions in the Sobolev space $H_{per}^1$ with the zero-mean property), the condition $0 < L < 2\pi$ also implies, via the Poincaré–Wirtinger inequality 
\begin{equation}\label{PW1}\int_0^L\phi^2dx\leq 	\biggl(\frac{L}{2\pi}\biggl)^2\int_0^L\phi_x^2dx,\end{equation} that the energy $\mathcal{E}$ in $(\ref{E})$ satisfies $\mathcal{E}(\phi, \phi_t) \geq 0$ for all  $t\geq0$.\\
\indent Let us discuss some contributors with respect to the stability of periodic waves for the equation $(\ref{KF2})$ and related topics. In fact, regarding the general equation
\begin{equation}\phi_{tt}-\phi_{xx}+V'(\phi)=0,\label{genKG}\end{equation}
some results concerning spectral/modulational stability of periodic waves have been determined in \cite{jones2} and \cite{jones1} under the condition that $V:\mathbb{R}\rightarrow \mathbb{R}$ is a  periodic (and bounded) nonlinearity (both references include the case $V(u)=\cos(u)$ - the well known sine-Gordon equation). Using assumptions similar to those in \cite{jones1} and \cite{MM}, the authors introduced a concise criterion for the presence of dynamical Hamiltonian-Hopf instabilities, which serves as a practical tool for determining the spectral stability of periodic traveling waves. Additional references on related topics can be found in \cite{pava}, \cite{clarke}, \cite{hakkaev1}, and \cite{stan}. It is important to highlight that the orbital instability of the sine-Gordon equation was established in \cite{Natali2011} in the entire energy space $H_{per}^1\times L_{per}^2$. To this end, the author used the abstract theory in \cite{grillakis1}. Using \cite{grillakis1}, orbital stability results for the sine-Gordon equation, in the space $H_{per,m}^1\times L_{per,m}^2$ consisting of functions in $H_{per}^1\times L_{per}^2$ with the zero-mean property, were established in \cite{nataliSG}. \\
\indent Orbital instability of periodic waves for the model $(\ref{KF2})$ has been determined  in \cite{loreno} and \cite{palacios}, where the authors also used the abstract theory in \cite{grillakis1} adapted to the periodic context. In addition, both authors also studied the orbital stability in the Sobolev space $H_{per,odd}^1\times L_{per,odd}^2$, consisting of odd periodic functions. A generalization of the results in \cite{loreno} and \cite{palacios}, which were obtained for power-type nonlinearities, can be found in \cite{chenpala}.\\
\indent One of the most important features of our work is that we prove orbital stability in the space $Y=H_{per,m}^1\times L_{per,m}^2$, which lies between $H_{per,odd}^1\times L_{per,odd}^2$ (associated with stable waves) and the full space $H_{per}^1\times L_{per}^2$ (associated with unstable waves). A key advantage is that, to study the orbital stability in $H_{per,odd}^1\times L_{per,odd}^2$, one must restrict to stationary waves of the form $\phi(x,t)=h(x)$, since the translational waves $\phi(x,t)=h(x-ct)$ with wave speed $c\in \mathbb{R}$ are not invariant in the space $H_{per,odd}^1\times L_{per,odd}^2$. As far as we can see, this imposes a significant restriction on the analysis of orbital stability for periodic waves.\\
\indent In order to prove our orbital stability in the space $Y$, it is necessary to present some key elements. To begin with, defining
$\mathcal{G}(\phi, \phi_t)=\mathcal{E}(\phi, \phi_t)-c\mathcal{F}(\phi, \phi_t),$ it is clear that any solution of \eqref{KF3} satisfies $\mathcal{G}'(h,ch')=0$, that is, $(h,ch')$ is a critical point of $\mathcal{G}$. We initiate our discussion by considering the assumption that the linearized operator
\begin{equation}\displaystyle\label{opconstrained2}\mathcal{L}_{\Pi}= \mathcal{G}''(h,ch')-\displaystyle \left(
\displaystyle	\begin{array}{ccc}
		\frac{3}{L}\int_{0}^{L} h^2 \cdot \; dx& & 0 \\\\
		0  & & 0
	\end{array}\right)=\displaystyle\mathcal{L}-\displaystyle \left(
\begin{array}{ccc}
\frac{3}{L}\int_{0}^{L} h^2\cdot \; dx & & 0 \\\\
0  & & 0
\end{array}\right),\end{equation} where $\mathcal{L}$ is given by
	\begin{equation}\label{matrixop313}
		\displaystyle \mathcal{L}=\left(
		\begin{array}{ccc}
		-\partial_x^2-1+3h^2 & &c\partial_x\\\\
		\ \ \ \ -c\partial_x & & 1
		\end{array}\right),
	\end{equation}
	has no negative eigenvalues and zero is a simple eigenvalue associated to the eigenfunction $(h',ch'')$. Based on these facts, we can assert the existence of a constant $C>0$ such that
\begin{equation}\label{positiveL}
(\mathcal{L}(p,q),(p,q))_{\mathbb{L}_{per,m}^2}=(\mathcal{L}_{\Pi}(p,q),(p,q))_{\mathbb{L}_{per,m}^2}\geq C||(p,q)||_{\mathbb{L}_{per,m}^2}^2,
\end{equation}
for all $(p,q)\in H_{per,m}^2\times H_{per,m}^1$ such that $(p,q)\bot (h',ch'')$. As established by stability theory in \cite[Section 4]{Natali2015} (see also \cite{grillakis1}), the coercive condition in $(\ref{positiveL})$ is sufficient to establish that the periodic wave $(h,ch')$ is orbitally stable. In Proposition $\ref{leman1}$, we prove that $\mathcal{L}_{\Pi}$ has no negative eigenvalues and zero is a simple eigenvalue associated with the eigenfunction $(h',ch'')$. Our analysis to prove $(\ref{positiveL})$ follows the arguments presented in \cite[Theorem 5.3.2]{kapitula} and \cite[Theorem 4.1]{pel-book}. We summarize our orbital stability result as follows:

\begin{theorem}[Orbital stability for the $\phi^4-$equation]
	Let $L\in (0,2\pi)$ be fixed. If $c \in (-1,1)$ and $h$ is the periodic solution given by $(\ref{Sol2})$, then the periodic wave $(h,ch')$ is orbitally stable in $Y=H_{per,m}^1\times L_{per,m}^2$.
	\label{stabthm}\end{theorem}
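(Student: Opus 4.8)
The plan is to derive Theorem~\ref{stabthm} from the coercivity estimate \eqref{positiveL} via the abstract orbital stability machinery in \cite[Section 4]{Natali2015}. The structure of the argument has three main ingredients: (i) global well-posedness of the Cauchy problem in the space $Y=H_{per,m}^1\times L_{per,m}^2$, so that the flow is well defined and the zero-mean property is preserved in time; (ii) the spectral information about the constrained linearized operator $\mathcal{L}_\Pi$ (equivalently $\mathcal{L}$ restricted to the zero-mean space), namely that it has no negative eigenvalues and a simple kernel spanned by $(h',ch'')$, which is the content of Proposition~\ref{leman1}; and (iii) the passage from this spectral picture to the positive-definiteness estimate \eqref{positiveL} on the subspace orthogonal to $(h',ch'')$, following \cite[Theorem 5.3.2]{kapitula} and \cite[Theorem 4.1]{pel-book}.

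First I would record that, by the global well-posedness result established earlier in the paper, for initial data in $Y$ the corresponding solution $\Phi(t)=(\phi(t),\phi_t(t))$ exists globally and stays in $Y$; in particular the conserved quantities $\mathcal{E}$ and $\mathcal{F}$ in \eqref{E}, \eqref{F} are constant along the flow, and $\mathcal{G}=\mathcal{E}-c\mathcal{F}$ is conserved as well. Since $(h,ch')$ is odd it lies in $Y$, and $\mathcal{G}'(h,ch')=0$ because $h$ solves \eqref{KF3}. The one-parameter symmetry group relevant here is spatial translation $T(s)(\phi,\phi_t)(x)=(\phi(x-s),\phi_t(x-s))$, under which both $\mathcal{E}$ and $\mathcal{F}$ are invariant and whose infinitesimal generator applied to $(h,ch')$ is exactly $(h',ch'')$, the kernel direction of $\mathcal{L}$. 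Note that translation preserves the zero-mean property, so the orbit $\{T(s)(h,ch'):s\in\mathbb{R}\}$ stays inside $Y$; this is the point emphasized in the introduction that makes working in $Y$ advantageous over the odd space.

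Next I would invoke Proposition~\ref{leman1} to obtain that $\mathcal{L}_\Pi$ (hence $\mathcal{L}$ on $\mathbb{L}_{per,m}^2$) is nonnegative with simple kernel $\mathrm{span}\{(h',ch'')\}$, and then upgrade this to the coercivity \eqref{positiveL}: on the codimension-one subspace of $\mathbb{L}_{per,m}^2$ orthogonal to $(h',ch'')$, the quadratic form $(\mathcal{L}\,\cdot,\cdot)$ is bounded below by $C\|\cdot\|^2$. The standard way to get the constant $C>0$ is a spectral-gap argument: since $0$ is an isolated simple eigenvalue and the rest of the spectrum is nonnegative, the infimum of the Rayleigh quotient over the orthogonal complement of the eigenfunction is strictly positive; one combines the discrete part of the spectrum with the essential/lower bound coming from the $-\partial_x^2$ term (controlled via the Poincaré--Wirtinger inequality \eqref{PW1}, which on $Y$ also guarantees $\mathcal{E}\ge 0$). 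With \eqref{positiveL} in hand, the abstract theorem of \cite[Section 4]{Natali2015} applies verbatim: the conserved functional $\mathcal{G}$ serves as a Lyapunov function modulo the translation orbit, a Taylor expansion $\mathcal{G}(\Phi)-\mathcal{G}(h,ch')=\frac12(\mathcal{L}(\Phi-(h,ch')),\Phi-(h,ch'))+o(\|\Phi-(h,ch')\|^2)$ together with the coercivity on the good subspace and a modulation/implicit-function argument to kill the kernel direction yields stability of the orbit, completing the proof.

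The main obstacle I anticipate is step (iii), the transition from the purely spectral statement of Proposition~\ref{leman1} to the quantitative coercive inequality \eqref{positiveL} with a uniform constant $C>0$. The subtlety is that $\mathcal{L}$ is a matrix operator with the off-diagonal first-order terms $\pm c\partial_x$, so it is not simply a direct sum of scalar operators and one cannot read off the spectral gap componentwise; one must carefully diagonalize or block-reduce $\mathcal{L}$ (exploiting the structure of \eqref{matrixop313}), verify that $(h',ch'')$ indeed spans the kernel in $\mathbb{L}_{per,m}^2$ and not merely in the full space, and confirm that orthogonality to this single vector suffices to land in the region of positivity --- this is exactly where the arguments of \cite[Theorem 5.3.2]{kapitula} and \cite[Theorem 4.1]{pel-book} are needed, and where the zero-mean constraint (which removes the problematic constant mode that would otherwise spoil nonnegativity, as it does in the full space $H_{per}^1\times L_{per}^2$ where the wave is unstable) plays its decisive role. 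Everything after \eqref{positiveL}, namely the application of the abstract stability criterion, is then essentially mechanical.
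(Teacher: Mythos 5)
Your proposal follows essentially the same route as the paper: global well-posedness in $Y$ (Theorem \ref{lwpthm2}), the spectral count for $\mathcal{L}_{\Pi}$ from Proposition \ref{leman1}, the upgrade to the coercivity estimate \eqref{positiveL} on the orthogonal complement of $(h',ch'')$ (the paper gets the spectral-gap constant by citing Kato, where you sketch the Rayleigh-quotient argument), and finally the abstract stability criterion of \cite[Section 4]{Natali2015}. This matches the paper's Proposition \ref{propstab}, so the proposal is correct and not a genuinely different argument.
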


\indent Next, we provide a more detailed description of our well-posedness result for the Cauchy problem associated with the evolution equation $(\ref{KF2})$, and we establish a connection between this result and orbital stability.  Let us consider the well-known Cauchy problem
   \begin{equation}\label{CPKG}
\left\{\begin{array}{llll}
\phi_{tt}-\phi_{xx}-\phi+\phi^3=0, \:\:\:\: \:\text{in} \: [0,L]\times(0,+\infty), \\\\
\phi(x,0)=\phi_0(x),\:\:\:\:\:\:\:\:\:\:\:\:\:\:\:\:\:\:\: \text{in} \: [0,L],\\\\
\phi_t(x,0)= \phi_1(x),\:\:\:\:\:\:\:\:\:\:\:\:\:\:\:\:\:\: \text{in} \: [0,L].
\end{array}\right.
\end{equation}
It is not possible to guarantee, using the standard semigroup approach as in \cite{pazy}, that $(\ref{CPKG})$ is at least locally well-posed in a Sobolev product space $H_{per,m}^s\times H_{per,m}^r$ for a suitable choice of integers $s,r\geq1$. Indeed, using \cite{pazy} we cannot guarantee that the auxiliary Cauchy problem 
$(\ref{CPKG})$ written in matrix form 
\begin{equation}\label{CPKG1}
\begin{cases}
	\left(\begin{array}{lll}\phi\\\beta\end{array}\right)_t
	=\left(\begin{array}{lll}0&  1\\ \partial_x^2&  0\end{array}\right)\left(\begin{array}{lll}\phi\\\beta\end{array}\right)+\left(\begin{array}{ccc}0\\ \phi-\phi^3\end{array}\right),\:\:\:\:\:\:\:\:\:\: \text{in} \: [0,L]\times(0,+\infty),\\\\
	\left(\begin{array}{llll}\phi(0)\\\beta(0)\end{array}\right)=\left(\begin{array}{ccc}\phi_0\\ \beta_0\end{array}\right), \:\:\:\:\:\:\:\:\:\:\:\:\:\:\:\:\:\:\:\:\:\:\:\:\:\:\:\:\:\:\:\:\:\:\:\:\:\:\:\:\:\:\:\:\:\:\:\:\:\:\:\:\:\:\:\: \text{in} \: [0,L],
\end{cases}
\end{equation}
where $\beta=\phi_t$, is locally well-posed in $H_{per,m}^s\times H_{per,m}^r$ for convenient integers $s,r\geq1$. As far as we know, the local well-posedness result in $H_{per,m}^2\times H_{per,m}^1$ is unexpected when employing the standard semigroup approach in the Cauchy problem $(\ref{CPKG1})$ since it is not natural that $\displaystyle\mathcal{H}(\phi,\psi)=\int_0^L\phi(x,t)dx$ be a conserved quantity for all $t>0$. 
 To resolve this challenge, it is necessary to examine the modified  Cauchy problem related to the equation in $(\ref{CPKG})$ expressed by

\begin{equation}\label{CPKG2}
\begin{cases}
	\left(\begin{array}{lll}\phi\\\psi\end{array}\right)_t
	=\left(\begin{array}{lll}\partial_x^{-1}&  0\\ 0&  \partial_x\end{array}\right)\left(\begin{array}{lll}0&  1\\ \partial_x^2&  0\end{array}\right)\left(\begin{array}{lll}\phi\\\psi\end{array}\right)+\left(\begin{array}{ccc}0\\ \partial_x(\phi-\phi^3)\end{array}\right),\:\text{in} \: [0,L]\times(0,+\infty),\\\\
	\left(\begin{array}{lll}\phi(0)\\\psi(0)\end{array}\right)=\left(\begin{array}{ccc}\phi_0\\ \psi_0\end{array}\right), \:\:\:\:\:\:\:\:\:\:\:\:\:\:\:\:\:\:\:\:\:\:\:\:\:\:\:\:\:\:\:\:\:\:\:\:\:\:\:\:\:\:\:\:\:\:\:\:\:\:\:\:\:\:\:\: \:\:\:\:\: \:\:\:\:\: \:\:\:\:\: \:\:\:\:\:\:\:\:  \text{in} \: [0,L],
\end{cases}
\end{equation}
where $\phi_t=\partial_x^{-1}\psi$ and $\partial_x^{-1}:L_{per,m}^2\rightarrow H_{per,m}^1$ is the well-known anti-derivative bounded linear operator defined in $L_{per,m}^2$. If the pair $(\phi,\psi)$ is a smooth solution to the equation in $(\ref{CPKG2})$ in an appropriate space, such as $H_{per,m}^3\times H_{per,m}^1$, we obtain that $\phi$ is a smooth solution of the projected Cauchy problem 
\begin{equation}\label{CPKG3}
\left\{\begin{array}{llll}
\displaystyle\phi_{tt}-\phi_{xx}-\phi+\phi^3-\frac{1}{L}\int_0^L\phi^3dx=0, \:\:\:\: \:\text{in} \: [0,L]\times(0,+\infty), \\\\
\phi(x,0)=\phi_0(x),\:\:\:\:\:\:\:\:\:\:\:\:\:\:\:\:\:\:\:\:\:\:\:\:\:\:\:\:\:\:\:\:\:\:\:\:\:\:\:\:\:\:\:\:\: \text{in} \: [0,L],\\\\
\phi_t(x,0)= \phi_1(x),\:\:\:\:\:\:\:\:\:\:\:\:\:\:\:\:\:\:\:\:\:\:\:\:\:\:\:\:\:\:\:\:\:\:\:\:\:\:\:\:\:\:\:\:\text{in} \: [0,L],
\end{array}\right.
\end{equation}
with the zero-mean property and initial data $(\phi_0,\phi_1)\in H_{per,m}^3\times H_{per,m}^2$. To be more precise, we have the following result:

\begin{theorem}[Local well-posedness for the Cauchy problem]\label{lwpthm}
Let $(\phi_0,\phi_1)\in H_{per,m}^3\times H_{per,m}^2$. There exists $t_{\rm{max}}>0$ and a unique  local (strong) solution $\phi$ of the Cauchy problem $(\ref{CPKG3})$ satisfying $(\phi,\phi_t)\in C([0,t_{\rm{max}}),H_{per,m}^3\times H_{per,m}^2)\cap C^1([0,t_{\rm{max}}),H_{per,m}^2\times L_{per,m}^2) $. %In addition, for all $t'<t_{\text{max}}$, the mapping
%$$(\phi_0,\phi_1)\in H_{per,m}^3\times H_{per,m}^2 \mapsto (\phi,\phi_t)\in C([0,t_{\text{max}}),H_{per,m}^3\times H_{per,m}^2)\cap C^1([0,t_{\text{max}}),H_{per,m}^2\times L_{per,m}^2),$$
%is continuous.
\end{theorem}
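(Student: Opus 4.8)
The plan is to construct the solution of the auxiliary system $(\ref{CPKG2})$ by the classical semigroup method of \cite{pazy} and then transfer it back to $(\ref{CPKG})$. I work in $X=H^2_{per,m}\times L^2_{per,m}$, write $U=(\phi,\psi)$, and split the right-hand side of $(\ref{CPKG2})$ as $U'=\mathcal{A}U+F(U)$, where
\[
\mathcal{A}\begin{pmatrix}\phi\\\psi\end{pmatrix}=\begin{pmatrix}\partial_x^{-1}\psi\\\partial_x^3\phi+\partial_x\phi\end{pmatrix},\qquad F(\phi,\psi)=\begin{pmatrix}0\\-\partial_x(\phi^3)\end{pmatrix},
\]
with domain $D(\mathcal{A})=H^3_{per,m}\times H^1_{per,m}$; the bounded term $\partial_x\phi$ may equally be placed in $F$. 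The given data $(\phi_0,\phi_1)\in H^3_{per,m}\times H^2_{per,m}$ produces $U(0)=(\phi_0,\partial_x\phi_1)\in D(\mathcal{A})$, which is where $\phi_1\in H^2_{per,m}$ is used.

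The first and main step is to prove that $\mathcal{A}$ generates a $C_0$-semigroup (in fact a $C_0$-group) on $X$. Expanding on the Fourier basis of zero-mean $L$-periodic functions, $\mathcal{A}$ acts on the $n$-th mode ($n\neq 0$) as a $2\times 2$ matrix whose off-diagonal entries have product $1-\mu_n^2$, with $\mu_n=2\pi n/L$, so its eigenvalues are $\pm\sqrt{1-\mu_n^2}$. The hypothesis $L\in(0,2\pi)$ forces $|\mu_n|\geq 2\pi/L>1$ for all $n\neq 0$, whence these eigenvalues are purely imaginary, $\pm i\sqrt{\mu_n^2-1}$. Rescaling the first coordinate of the $n$-th mode by $\mu_n^2$ — exactly the weight built into the $H^2\times L^2$ norm — one checks that the diagonalizing matrices and their inverses are bounded uniformly in $n$ (the relevant eigenvector ratios lie in a compact set bounded away from degeneracy, since $\mu_n/\sqrt{\mu_n^2-1}$ stays in a finite interval for $|\mu_n|\geq 2\pi/L$). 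Hence $\sup_{t\in\mathbb{R}}\|e^{t\mathcal{A}}\|_{\mathcal{B}(X)}<\infty$; equivalently, $X$ carries an equivalent inner product for which $\mathcal{A}$ is skew-adjoint and Stone's theorem applies. (With $\partial_x\phi$ absorbed into $F$, the modal eigenvalues become $\pm i\mu_n$ and the same conclusion holds.)

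Next, since $H^s_{per}$ is a Banach algebra for $s\geq1$ and $\|a^3-b^3\|_{H^s}\lesssim(\|a\|_{H^s}^2+\|b\|_{H^s}^2)\|a-b\|_{H^s}$, and since $\partial_x:H^s\to H^{s-1}$ is bounded, the map $\phi\mapsto\partial_x(\phi^3)$ is smooth and locally Lipschitz both from $H^2$ into $L^2$ and from $H^3$ into $H^1$; thus $F\in C^1(X,X)$ and $F$ is locally Lipschitz on $D(\mathcal{A})$. Crucially $\partial_x(\phi^3)$ always has zero mean, which is precisely why $(\ref{CPKG2})$ — unlike $(\ref{CPKG1})$ — leaves the zero-mean spaces invariant. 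By the abstract local theory (\cite[Theorems 6.1.4 and 6.1.5]{pazy}) there exist $t_{\max}>0$ and a unique strong solution $U=(\phi,\psi)\in C([0,t_{\max}),D(\mathcal{A}))\cap C^1([0,t_{\max}),X)$ of $(\ref{CPKG2})$ with $U(0)=(\phi_0,\partial_x\phi_1)$.

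Finally I transfer back to $(\ref{CPKG})$. From the first line of $(\ref{CPKG2})$, $\psi=\partial_x\phi_t$; inserting this into the second line gives $\partial_x(\phi_{tt}-\phi_{xx}-\phi+\phi^3)=0$, and since $\phi(t)\in H^3_{per,m}$ and $\partial_x(\phi^3)$ have zero mean, applying $\partial_x^{-1}$ shows that $\phi$ solves $(\ref{KF2})$; moreover $\phi(0)=\phi_0$ and $\phi_t(0)=\partial_x^{-1}\psi(0)=\partial_x^{-1}\partial_x\phi_1=\phi_1$. For the regularity, $\phi\in C([0,t_{\max}),H^3_{per,m})$ directly, $\phi_t=\partial_x^{-1}\psi\in C([0,t_{\max}),H^2_{per,m})$ because $\psi\in C([0,t_{\max}),H^1_{per,m})$, and $\phi_{tt}=\phi_{xx}+\phi-\phi^3\in C([0,t_{\max}),L^2_{per,m})$, so that $(\phi,\phi_t)$ belongs to $C([0,t_{\max}),H^3_{per,m}\times H^2_{per,m})\cap C^1([0,t_{\max}),H^2_{per,m}\times L^2_{per,m})$, as claimed. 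Uniqueness of $\phi$ follows from uniqueness for $(\ref{CPKG2})$: any solution of $(\ref{CPKG})$ in this class gives, via $\psi:=\partial_x\phi_t$, a solution of $(\ref{CPKG2})$ with the same data. I expect the semigroup-generation step for $\mathcal{A}$ on $H^2_{per,m}\times L^2_{per,m}$ to be the only genuine obstacle — it is exactly what a naive application of \cite{pazy} to $(\ref{CPKG1})$ cannot deliver — and it relies on the modal computation above, in which the condition $L<2\pi$ secures both the purely imaginary spectrum and, just as importantly, the uniform boundedness of the diagonalizations in the Sobolev-weighted norm.
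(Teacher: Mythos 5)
Your overall strategy coincides with the paper's: pass to the auxiliary zero-mean system $(\ref{CPKG2})$, generate a semigroup on $X=H^2_{per,m}\times L^2_{per,m}$ with domain $H^3_{per,m}\times H^1_{per,m}$, treat the cubic term as a locally Lipschitz perturbation preserving the zero mean (because it is an exact derivative), and transfer the strong solution back through $\psi=\partial_x\phi_t$, $U(0)=(\phi_0,\partial_x\phi_1)$. The one step you argue genuinely differently is generation: you diagonalize mode by mode and invoke Stone's theorem in an equivalent inner product, whereas the paper applies Lumer--Phillips, checking $(AU,U)_X=0$ directly (so $A$ is already skew in the standard $H^2_{per,m}\times L^2_{per,m}$ pairing, with no renormalization of the norm) and obtaining surjectivity of $\lambda-A$ from Lax--Milgram. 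Both computations are correct, but note that by keeping $\partial_x\phi$ inside the generator your main line needs $L\in(0,2\pi)$ to make the modal eigenvalues $\pm\sqrt{1-\mu_n^2}$ purely imaginary, while Theorem~\ref{lwpthm} carries no restriction on $L$; the paper avoids this by leaving all of $\partial_x(\phi-\phi^3)$ in the nonlinearity, which is the variant you mention parenthetically and should promote to the main argument. Your appeal to the abstract semilinear theorems of \cite{pazy} replaces the paper's hands-on Banach fixed point construction and subsequent upgrade from mild to strong solution; this is an equivalent and somewhat shorter route.

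One step in your reconstruction of $(\ref{CPKG})$ from $(\ref{CPKG2})$ is too quick, and the same care is needed in the paper's own derivation. From $\partial_x\bigl(\phi_{tt}-\phi_{xx}-\phi+\phi^3\bigr)=0$ one only concludes that $\phi_{tt}-\phi_{xx}-\phi+\phi^3$ is a function of $t$ alone; since $\partial_x^{-1}\partial_x g=g-\frac1L\int_0^Lg\,dx$, applying $\partial_x^{-1}$ to the second equation actually yields $\phi_{tt}=\phi_{xx}+\phi-\phi^3+\frac1L\int_0^L\phi(\cdot,t)^3\,dx$. The point is that $\phi^3$ need not have zero mean even when $\phi$ does (take, e.g., $\phi=a\cos(2\pi x/L)+b\cos(4\pi x/L)$), and this is consistent with the fact that $\int_0^L\phi\,dx$ is not conserved by the flow of $(\ref{CPKG})$, so a solution of $(\ref{CPKG})$ cannot in general remain in $H^s_{per,m}$. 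To close your argument you must either show that $\int_0^L\phi^3\,dx=0$ is propagated along the auxiliary flow (true for odd data, not for general zero-mean data), or acknowledge that what is solved is the zero-mean projection of the $\phi^4$-equation. Your regularity bookkeeping and the uniqueness argument via the correspondence $\psi=\partial_x\phi_t$ are otherwise fine.
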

To prove Theorem $\ref{lwpthm}$, we first need to obtain local strong solutions to the modified problem in $(\ref{CPKG2})$ by applying the abstract semigroup theory as detailed in \cite[Chapter 1, Chapter 6]{pazy}. To this end, we prove that the linear (unbounded) operator
$\displaystyle A=\left(\begin{array}{lll}\partial_x^{-1}&  0\\ 0&  \partial_x\end{array}\right)\left(\begin{array}{lll}0&  1\\ \partial_x^2&  0\end{array}\right)$  defined on $X=H_{per,m}^2\times L_{per,m}^2$ with domain $D(A)=H_{per,m}^3\times H_{per,m}^1$ is a generator of a contraction semigroup $\{S(t)\}_{t\geq0}$ on $X$ (see Lemma $\ref{lemmaC0}$). We also establish the existence of global weak solution associated with the Cauchy problem $(\ref{CPKG})$. This result is essential, since the stability notion adopted here (see Definition \ref{stadef}) requires orbital stability of periodic waves in the energy space  $Y$.

\begin{theorem}[Existence of a weak solution]\label{lwpthm2}	
	Let $L\in (0,2\pi)$ be fixed and consider $(\phi_0,\phi_1)\in Y$. There exists a unique  global (weak) solution $\phi$ of the Cauchy problem $(\ref{CPKG})$ satisfying $(\phi,\phi_t)\in C([0,+\infty),Y)$.
\end{theorem}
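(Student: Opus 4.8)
The plan is to construct the global weak solution as a limit of smooth global solutions, using the conserved quantities $\mathcal{E}$ in $\eqref{E}$ and $\mathcal{F}$ in $\eqref{F}$ together with the Poincaré–Wirtinger inequality $\eqref{PW1}$ to obtain uniform-in-time bounds in $Y$, and then a compactness argument to pass to the limit in the cubic nonlinearity.

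\textbf{Step 1 (Approximation and global smooth solutions).} Given $(\phi_0,\phi_1)\in Y$, choose $(\phi_0^n,\phi_1^n)\in H_{per,m}^3\times H_{per,m}^2$ with $(\phi_0^n,\phi_1^n)\to(\phi_0,\phi_1)$ in $Y$ (for instance by truncating Fourier series, which preserves the zero-mean property). By Theorem $\ref{lwpthm}$ each such datum generates a local strong solution $\phi^n$, along which $\mathcal{E}$ and $\mathcal{F}$ are conserved. Since $0<L<2\pi$, inequality $\eqref{PW1}$ gives
\[
\mathcal{E}(\phi,\phi_t)\ \geq\ \tfrac12\Bigl(1-\bigl(\tfrac{L}{2\pi}\bigr)^2\Bigr)\|\phi_x\|_{L^2}^2+\tfrac12\|\phi_t\|_{L^2}^2+\tfrac14\|\phi\|_{L^4}^4,
\]
so, again by $\eqref{PW1}$, the $Y$-norm of $(\phi^n,\phi^n_t)(t)$ is bounded uniformly in $t\geq0$ and in $n$ by a constant depending only on $\|(\phi_0,\phi_1)\|_Y$. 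In particular $\|\phi^n(t)\|_{L^\infty}$ is uniformly bounded via $H_{per,m}^1\hookrightarrow L^\infty_{per}$, and a standard higher-order energy estimate (Gronwall applied to the $H_{per,m}^3\times H_{per,m}^2$ energy, whose growth is controlled by $\|\phi^n\|_{L^\infty}$) rules out finite-time blow-up. Hence each $\phi^n$ is global, $(\phi^n,\phi^n_t)\in C([0,\infty);Y)$, and $\sup_n\sup_{t\geq0}\|(\phi^n,\phi^n_t)(t)\|_Y\leq M<\infty$.

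\textbf{Step 2 (Compactness and passage to the limit).} Fix $T>0$. Then $\{\phi^n\}$ is bounded in $L^\infty(0,T;H_{per,m}^1)$ and $\{\phi^n_t\}$ in $L^\infty(0,T;L_{per,m}^2)$, so $\{\phi^n\}$ is equicontinuous from $[0,T]$ into $L_{per,m}^2$; since $H_{per,m}^1\hookrightarrow L_{per,m}^2$ is compact, Arzelà–Ascoli yields a subsequence with $\phi^n\to\phi$ in $C([0,T];L_{per,m}^2)$. Interpolating with the $L^\infty(0,T;H^1)$ bound and using $H_{per}^s\hookrightarrow C_{per}$ for $s\in(\tfrac12,1)$ upgrades this to $\phi^n\to\phi$ in $C([0,T];C_{per})$, whence $(\phi^n)^3\to\phi^3$ in $C([0,T];L_{per,m}^2)$. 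Passing to weak-$\ast$ limits, $\phi^n\rightharpoonup\phi$ in $L^\infty(0,T;H_{per,m}^1)$ and $\phi^n_t\rightharpoonup\phi_t$ in $L^\infty(0,T;L_{per,m}^2)$. Letting $n\to\infty$ in the weak (distributional) formulation of $\eqref{KF2}$ shows that $\phi$ is a weak solution on $[0,T]$; a diagonal argument over an increasing sequence $T\to\infty$ produces a global weak solution with $(\phi,\phi_t)\in L^\infty(0,\infty;Y)$, weakly continuous in time.

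\textbf{Step 3 (Continuity in $Y$ and uniqueness).} To upgrade weak to strong continuity in $Y$, observe that $\frac12\|\phi_x\|_{L^2}^2+\frac12\|\phi_t\|_{L^2}^2$ is weakly lower semicontinuous while $-\frac12\|\phi\|_{L^2}^2+\frac14\|\phi\|_{L^4}^4$ is continuous under the $C([0,T];C_{per})$ convergence, so $\mathcal{E}(\phi(t))\leq\liminf_n\mathcal{E}(\phi^n(t))=\liminf_n\mathcal{E}(\phi^n(0))=\mathcal{E}(\phi(0))$; applying the same construction to the time-reversed problem (equation $\eqref{KF2}$ is invariant under $t\mapsto-t$) gives the reverse inequality, hence $\mathcal{E}(\phi(t))=\mathcal{E}(\phi(0))$ for all $t$. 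Since $t\mapsto\|\phi(t)\|_{L^2}^2$ and $t\mapsto\|\phi(t)\|_{L^4}^4$ are continuous (from $\phi\in C([0,\infty);C_{per})$), conservation of $\mathcal{E}$ forces $t\mapsto\|\phi_x(t)\|_{L^2}^2+\|\phi_t(t)\|_{L^2}^2$ to be continuous; combined with weak continuity and $\eqref{PW1}$ this gives $(\phi,\phi_t)\in C([0,\infty);Y)$. For uniqueness, the difference $w=\phi-\tilde\phi$ of two weak solutions with the same data solves $w_{tt}-w_{xx}-w+(\phi^2+\phi\tilde\phi+\tilde\phi^2)w=0$, the coefficient lying in $L^\infty(0,\infty;L^\infty_{per})$ by $H_{per,m}^1\hookrightarrow L^\infty_{per}$; testing against $w_t$ (after a routine regularization to justify the computation) and using $\eqref{PW1}$ yields $\frac{d}{dt}\bigl(\|w_t\|_{L^2}^2+\|w_x\|_{L^2}^2-\|w\|_{L^2}^2\bigr)\leq C\bigl(\|w_t\|_{L^2}^2+\|w_x\|_{L^2}^2\bigr)$, so $w\equiv0$ by Gronwall.

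\textbf{Main obstacle.} The delicate point is that weak solutions are not regular enough to justify the energy identity or the difference estimate directly; this is handled by exploiting the time-reversibility of $\eqref{KF2}$ (for the conservation of $\mathcal{E}$, and hence the strong continuity in $Y$) and by a regularization argument for uniqueness. Everything else is routine once the uniform $Y$-bound from the conserved energy and the compactness of $H_{per,m}^1\hookrightarrow L_{per,m}^2$ are in place, the condition $L\in(0,2\pi)$ being used precisely so that $\eqref{PW1}$ makes $\mathcal{E}$ coercive on $Y$.
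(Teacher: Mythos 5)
Your overall strategy (approximate the data in $Y$ by smooth data, use Theorem \ref{lwpthm} plus conservation of $\mathcal{E}$ and the Poincar\'e--Wirtinger inequality \eqref{PW1} to get uniform-in-time $Y$-bounds, then pass to the limit) matches the paper's, but the limit passage is genuinely different. The paper does not use compactness at all: it shows that the approximating sequence $(\phi_n,\phi_{t,n})$ is \emph{Cauchy} in $L^\infty([0,T],Y)$, by writing the equation for the difference $w=\phi_n-\phi_r$, testing against $w_t$ (legitimate there, since both are strong solutions), absorbing the $\|w\|_{L^2}^2$ term with \eqref{PW1} --- this is exactly where $L<2\pi$ enters, through the constant $M_4=\frac12[1-(L/2\pi)^2]>0$ --- and applying Gronwall to reach \eqref{teo.fraco.eq10}. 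Strong convergence in $C([0,T],Y)$ then gives continuity of the limit for free, the nonlinearity passes to the limit trivially, and the same difference estimate yields uniqueness. Your route (Arzel\`a--Ascoli plus weak-$*$ limits, then energy conservation via lower semicontinuity and time reversal to upgrade weak to strong continuity) buys nothing here and costs more: since $\phi^4$ in one space dimension is fully subcritical with respect to the $Y$-energy and $H^1_{per}\hookrightarrow L^\infty_{per}$, the contraction-type difference estimate is available and is the shorter path. One point where you are actually \emph{more} careful than the paper: you note that globality of the strong solutions requires a higher-order energy/Gronwall estimate controlled by the uniform $\|\phi^n\|_{L^\infty}$ bound, whereas Remark \ref{gwp} only records the $Y$-bound.

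Two soft spots in your write-up deserve flagging. First, the time-reversal argument for $\mathcal{E}(\phi(t))\ge\mathcal{E}(\phi(0))$ implicitly identifies the backward-constructed solution launched from $(\phi(t),\phi_t(t))$ with the original one at time $0$; that identification requires uniqueness, which you only establish afterwards, so the logical order must be rearranged (prove uniqueness first, then energy conservation, then strong continuity). Second, ``testing against $w_t$ after a routine regularization'' for the difference of two \emph{weak} solutions is the classical sticking point for energy-class uniqueness of wave equations: $w_t$ is not an admissible test function and $\langle w_{tt},w_t\rangle$ is not defined pointwise in time, so one needs either the Ladyzhenskaya/Lions device $v(x,t)=\int_t^s w\,d\tau$ or a genuine mollification-in-time argument with commutator control. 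This is doable in the present 1D subcritical setting, but it is not routine and should be spelled out; the paper's Cauchy-sequence approach sidesteps it entirely because every integration by parts is performed on strong solutions before passing to the limit in \eqref{teo.fraco.eq10}.
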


\begin{remark}\label{gwpstab}
\indent Some important comments deserve to be highlighted regarding Theorem $\ref{stabthm}$ and the question of the existence of local and global solutions in the energy spaces $H_{per}^1 \times L_{per}^2$, $H_{per,odd}^1 \times L_{per,odd}^2$, and $H_{per,m}^1 \times L_{per,m}^2$. In order to study the stability of periodic waves in the space $H_{per,m}^1 \times L_{per,m}^2$, we must consider the modified Cauchy problem $(\ref{CPKG2})$ instead of the standard Cauchy problem~$(\ref{CPKG})$, which is posed in the spaces $H_{per}^1 \times L_{per}^2$ and $H_{per,odd}^1 \times L_{per,odd}^2$. The existence of local solutions in the last two cases can be obtained by applying the semigroup theory developed in \cite{pazy}, which establishes the existence of local mild solutions in these spaces. In the first case, we can follow \cite{loreno} and \cite{palacios} to conclude the orbital instability of the periodic wave~$(\ref{Sol2})$. In the second space, the mild solution $\phi$ of equation~$(\ref{KF2})$ is odd, and thus satisfies the zero-mean property $\int_0^L \phi(x,t), dx = 0$ for all $t \in [0, t_{\max})$. Therefore, global solutions can be established using the classical Poincar\'e–Wirtinger inequality without further difficulties (see Remark~$\ref{gwp}$ for more details). The orbital stability in $H_{per,odd}^1 \times L_{per,odd}^2$ is then obtained since the restricted linearized operator $\mathcal{L}_{odd}$ in $(\ref{matrixop313})$ is considered in $L_{per,odd}^2 \times L_{per,odd}^2$, with domain $H_{per,odd}^2 \times L_{per,odd}^2$. In fact, using \cite[Proposition 3.8]{loreno}, we establish that the first negative eigenvalue of the linear operator $\mathcal{L}_1=-\omega\partial_x^2-1+3h^2$, defined in the entire space $L_{per}^2$, is associated with an even periodic eigenfunction. Consequently, $n(\mathcal{L}_{1,\text{odd}}) = n(\mathcal{L}_{\text{odd}}) = 0$. In addition, for $c = 0$, we find that $(h', 0)$ is the only element in $\Ker(\mathcal{L})$, and since $h'$ is even, we conclude that $\Ker(\mathcal{L}_{odd}) = {0}$. Therefore, $\mathcal{L}_{odd}$ is a positive linear operator, and the coercivity condition as in $(\ref{positiveL})$
\begin{equation}\label{positiveL123}
(\mathcal{L}_{odd}(p,q), (p,q))_{\mathbb{L}_{per}^2} \geq C||(p,q)||_{\mathbb{L}_{per}^2}^2,
\end{equation}
for all $(p,q) \in H_{per,odd}^2 \times H_{per,odd}^1$, is automatically satisfied, as required for orbital stability. Thus, our result, restricted to the new energy space $H_{per,m}^1 \times L_{per,m}^2$, appears to be more general in the context of the $\phi^4-$equation.
\end{remark}

Our paper is organized as follows. In Section \ref{section3}, we show the well-posedness results for the Cauchy problem $(\ref{CPKG})$ in smooth spaces. The existence of periodic traveling waves for the equation $(\ref{KF2})$ and the spectral analysis for the linearized operators $\mathcal{L}$ and $\mathcal{L}_{\Pi}$ are established in Section \ref{section4}. Finally, the orbital stability of the periodic waves will be shown in Section \ref{section5}.\\

\textbf{Notation.} Here we introduce the basic notation concerning the periodic Sobolev spaces. For a more complete introduction to these spaces we refer the reader to \cite{Iorio}. By $L^2_{per}=L^2_{per}([0,L])$, $L>0$, we denote the space of all square integrable real functions which are $L$-periodic. For $s\geq0$, the Sobolev space
$H^s_{per}=H^s_{per}([0,L])$
is the set of all periodic real functions such that
$\displaystyle
\|f\|^2_{H^s_{per}}= L\displaystyle \sum_{k=-\infty}^{+\infty}(1+|k|^2)^s|\hat{f}(k)|^2 <+\infty,
$
where $\hat{f}$ is the periodic Fourier transform of $f$. The space $H^s_{per}$ is a  Hilbert space with natural inner product denoted by $(\cdot, \cdot)_{H^s_{per}}$. When $s=0$, the space $H^s_{per}$ is isometrically isomorphic to the space  $L^2_{per}$, that is, $L^2_{per}=H^0_{per}$ (see, e.g., \cite{Iorio}). The norm and inner product in $L^2_{per}$ will be denoted by $\|\cdot \|_{L^2_{per}}$ and $(\cdot, \cdot)_{L^2_{per}}$, respectively. 

For $s \geq 0$, we define
$\displaystyle
	H^s_{per,m} = \left\{ f \in H^s_{per}\; ; \; \frac{1}{L}\int_0^L  f(x) \; dx =0 \right\},
$
endowed with  norm and inner product of $H_{per}^s$. Denote the topological dual of $H^s_{per,m}$ by
$
H^{-s}_{per,m}=(H^s_{per,m})' . 
$
In addition, to simplify notation we set
$$\mathbb{H}^s_{per}= H^s_{per} \times H^s_{per},\ \
\mathbb{H}^s_{per,m}= H^s_{per,m} \times H^s_{per,m},\ \
\mathbb{L}^2_{per}= L^2_{per} \times L^2_{per},$$
endowed with their usual norms and scalar products.

\section{Local and global well-posedness}\label{section3}

The aim of this section is to prove Theorems $\ref{lwpthm}$ and $\ref{lwpthm2}$. We begin with the following elementary lemma:

\begin{lemma}\label{lemmaC0}The operator $A=\left(\begin{array}{lll}\partial_x^{-1}&  0\\ 0&  \partial_x\end{array}\right)\left(\begin{array}{lll}0&  1\\ \partial_x^2&  0\end{array}\right)$ defined on $X=H_{per,m}^2\times L_{per,m}^2$ with domain $D(A)=H_{per,m}^3\times H_{per,m}^1$ is a generator of a $C_0-$semigroup of 
contractions $\{S(t)\}_{t\geq0}$ on the space $X$.
\end{lemma}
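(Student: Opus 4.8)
\section*{Proof proposal}

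The plan is to apply the Lumer--Phillips theorem, after first rewriting $A$ transparently and choosing a convenient (but equivalent) inner product on $X$. Carrying out the matrix multiplication one gets
\[
A(\phi,\psi)=\bigl(\partial_x^{-1}\psi,\ \partial_x^{3}\phi\bigr),
\]
and, since $\partial_x^{-1}$ maps $H^1_{per,m}$ into $H^2_{per,m}$ and $\partial_x^3$ maps $H^3_{per,m}$ into $L^2_{per,m}$, the operator $A$ indeed sends $D(A)=H^3_{per,m}\times H^1_{per,m}$ into $X=H^2_{per,m}\times L^2_{per,m}$, with $D(A)$ dense in $X$. The standard product inner product of $H^2_{per}\times L^2_{per}$ does not render $A$ dissipative, so on $X$ I would instead work with
\[
\langle(\phi_1,\psi_1),(\phi_2,\psi_2)\rangle_X:=(\partial_x^2\phi_1,\partial_x^2\phi_2)_{L^2_{per}}+(\psi_1,\psi_2)_{L^2_{per}},
\]
which, by applying the Poincar\'e--Wirtinger inequality $(\ref{PW1})$ twice (legitimate because the first component and its derivative have zero mean), is equivalent to the usual norm of $H^2_{per,m}\times L^2_{per,m}$.

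Next I would check that $A$ is skew-symmetric, hence dissipative, for $\langle\cdot,\cdot\rangle_X$. For $(\phi,\psi)\in D(A)$ one has $\partial_x^2\partial_x^{-1}\psi=\partial_x\psi$, so that, after one integration by parts over the period,
\[
\langle A(\phi,\psi),(\phi,\psi)\rangle_X=(\partial_x\psi,\partial_x^{2}\phi)_{L^2_{per}}+(\partial_x^{3}\phi,\psi)_{L^2_{per}}=-(\partial_x^{3}\phi,\psi)_{L^2_{per}}+(\partial_x^{3}\phi,\psi)_{L^2_{per}}=0 .
\]
In particular $\langle Au,u\rangle_X=0$ for every $u\in D(A)$, so both $A$ and $-A$ are dissipative on $(X,\langle\cdot,\cdot\rangle_X)$.

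The only genuinely nontrivial step, and the one I expect to be the main obstacle, is the range condition $\mathrm{Range}(I-A)=X$. Given $(f,g)\in X$, the equation $(I-A)(\phi,\psi)=(f,g)$ reads $\phi-\partial_x^{-1}\psi=f$ and $\psi-\partial_x^{3}\phi=g$; the first identity forces $\psi=\partial_x(\phi-f)$, which automatically has zero mean, and inserting this into the second reduces everything to the scalar problem
\[
(\partial_x-\partial_x^{3})\phi=g+\partial_x f=:F\in L^2_{per,m}.
\]
Expanding in Fourier series this becomes, mode by mode, a fixed nonzero multiple of $ik(1+k^2)\widehat\phi(k)=\widehat F(k)$; since $F$ has zero mean we set $\widehat\phi(0)=0$ and $\widehat\phi(k)=\widehat F(k)/\bigl(ik(1+k^2)\bigr)$ for $k\neq0$, and the bound $\sum_{k\neq0}(1+k^2)^3|\widehat\phi(k)|^2\le 2\sum_{k\neq0}|\widehat F(k)|^2<\infty$ shows that $\phi\in H^3_{per,m}$, whence $\psi=\partial_x\phi-\partial_x f\in H^1_{per,m}$ and $(\phi,\psi)\in D(A)$. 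Moreover dissipativity yields $\|(I-A)u\|_X\ge\|u\|_X$, so $I-A$ is a bijection of $D(A)$ onto $X$ with bounded inverse. Thus $A$ is a densely defined, closed, dissipative operator with $\mathrm{Range}(I-A)=X$, and the Lumer--Phillips theorem (see \cite[Chapter 1]{pazy}) gives that $A$ generates a $C_0$-semigroup of contractions $\{S(t)\}_{t\geq0}$ on $X$. (Since $-A$ is dissipative as well, $\{S(t)\}_{t\geq0}$ is in fact a group of isometries of $X$, although only the contraction property is needed here.)
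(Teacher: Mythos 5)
Your proposal is correct and follows essentially the same route as the paper: Lumer--Phillips, skew-symmetry of $A$ (hence dissipativity) via one integration by parts, and the range condition for the resolvent equation. The only cosmetic differences are that you make the equivalent homogeneous inner product on $H_{per,m}^2\times L_{per,m}^2$ explicit (the paper uses it tacitly when writing $(\partial_x^{-1}\psi,\phi)_{H_{per,m}^2}=(\partial_x\psi,\partial_x^2\phi)_{L_{per}^2}$), and that you eliminate $\psi$ from the first equation and solve the resulting problem explicitly by Fourier series, whereas the paper eliminates it from the second and solves $\lambda^2\phi-\partial_x^2\phi=\partial_x^{-1}g+\lambda f$ by Lax--Milgram.
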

\begin{proof}
Our goal is to use Lumer-Phillips Theorem (see \cite[Chapter 1, Theorem 4.3]{pazy}). First, we see that $D(A)$ is a dense subspace in $X$ and $A$ is an unbounded 
linear operator defined in $D(A)$. We prove that
$A$ is dissipative. In fact for a given $(\phi,\psi)\in D(A)$, we have that 
\begin{equation*}
 \left(A(\phi,\psi),(\phi,\psi)\right)_{X}=(\partial_x^{-1}\psi,\phi)_{H_{per}^2}  +(\partial_x^3\phi,\psi)_{L_{per}^2}  =(\partial_x\psi,\partial_x^2\phi)_{L_{per}^2}  -(\partial_x^2\phi,\partial_x\psi)_{L_{per}^2} =0.
\end{equation*}
On the other hand, consider $\lambda>0$. We claim that $(\lambda Id -A):D(A)\subset X \rightarrow X$ is onto. Indeed, by considering $(f,g)\in X$, we need to find $(\phi,\psi)\in D(A)$ that solves the equation
\begin{equation}\label{eq1}
	\lambda(\phi,\psi)-A(\phi,\psi)= (f,g).
	\end{equation}
\indent Solving $(\ref{eq1})$ is equivalent to finding a solution to the system
\begin{equation}\label{eq2}
	\left\{\begin{array}{llll}\lambda \phi-\partial_x^{-1}\psi=f,\\\\
		\lambda \psi-\partial_x^3\phi=g.
	\end{array}\right.
	\end{equation}
Hence, solving the system (\ref{eq2}) is equivalent to finding a solution of the single equation 
\begin{equation}\label{eq3}
	\lambda^2\phi-\partial_x^2\phi=\partial_x^{-1}g+\lambda f.
\end{equation}
However, equation $(\ref{eq3})$ can be solved by a standard application of the Lax-Milgram Lemma. Then, using Lumer-Philips Theorem, we obtain that $A$ is a generator of a $C_0-$semigroup of contractions $\{S(t)\}_{t\geq0}$ on $X$. This completes the proof of the lemma.
\end{proof}

\indent The next lemma establishes the existence of a local solution of the Cauchy problem $(\ref{CPKG2})$.

\begin{lemma}\label{lwpthm1}
Let $(\phi_0,\psi_0)\in H_{per,m}^3\times H_{per,m}^1$. There exists $t_{\rm{max}}>0$ and a unique local (strong) solution   $(\phi,\psi)$ of the Cauchy problem $(\ref{CPKG2})$ satisfying $(\phi,\psi)\in C([0,t_{\rm{max}}),H_{per,m}^3\times H_{per,m}^1)\cap C^1([0,t_{\rm{max}}),H_{per,m}^2\times L_{per,m}^2) $. %In addition, for all $t'<t_{\rm{max}}$, the mapping
%$$(\phi_0,\psi_0)\in H_{per,m}^3\times H_{per,m}^1 \mapsto (\phi,\psi)\in C([0,t_{\rm{max}}),H_{per,m}^3\times H_{per,m}^1)\cap C^1([0,t_{\rm{max}}),H_{per,m}^2\times L_{per,m}^2),$$
%is continuous.
\end{lemma}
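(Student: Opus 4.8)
The plan is to recast the Cauchy problem $(\ref{CPKG2})$ as the abstract semilinear evolution equation
\begin{equation*}
\frac{d}{dt}U(t)=AU(t)+F(U(t)),\qquad U(0)=U_0:=(\phi_0,\psi_0)\in D(A),
\end{equation*}
where $U=(\phi,\psi)$, $A$ is the operator from Lemma $\ref{lemmaC0}$ acting on $X=H_{per,m}^2\times L_{per,m}^2$ with domain $D(A)=H_{per,m}^3\times H_{per,m}^1$, and the nonlinearity is
\begin{equation*}
F(\phi,\psi)=\bigl(0,\ \partial_x(\phi-\phi^3)\bigr).
\end{equation*}
By Lemma $\ref{lemmaC0}$, $A$ generates the $C_0$-semigroup of contractions $\{S(t)\}_{t\ge0}$ on $X$; by general theory it also restricts to a $C_0$-semigroup on the Banach space $D(A)$ endowed with the graph norm. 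A strong solution will be obtained as a fixed point of the Duhamel map
\begin{equation*}
U\longmapsto\Bigl(t\mapsto S(t)U_0+\int_0^tS(t-s)F(U(s))\,ds\Bigr),
\end{equation*}
so that the whole argument reduces to checking the mapping properties of $F$ and invoking the standard semilinear semigroup machinery of \cite[Chapter~6]{pazy}.

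First I would verify that $F$ is well defined and locally Lipschitz, simultaneously as a map $X\to X$ and as a map $D(A)\to D(A)$. The only point requiring care is the cubic term: since $H_{per}^s$ is a Banach algebra for $s>\tfrac12$, the map $\phi\mapsto\phi^3$ is smooth, hence locally Lipschitz, from $H_{per}^2$ into itself and from $H_{per}^3$ into itself; composing with the bounded operator $\partial_x\colon H_{per}^{s}\to H_{per}^{s-1}$ shows that $\phi\mapsto\partial_x(\phi-\phi^3)$ is locally Lipschitz from $H_{per,m}^2$ into $H_{per,m}^1\hookrightarrow L_{per,m}^2$ and from $H_{per,m}^3$ into $H_{per,m}^2\hookrightarrow H_{per,m}^1$. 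Note that $\partial_x g$ has zero average over a period for every periodic $g$, so $F$ automatically takes values in the zero-mean spaces; combined with the fact that $A$ keeps the orbit inside these spaces by its very construction (recall that $\partial_x^{-1}$ in the definition of $A$ is only meaningful on $L_{per,m}^2$), this guarantees that no mean-value obstruction arises along the flow.

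Next I would run the contraction argument in $C([0,T],D(A))$ for $T>0$ small: the local Lipschitz bound for $F\colon D(A)\to D(A)$, together with $\|S(t)\|\le1$, makes the Duhamel map a contraction on a closed ball of $C([0,T],D(A))$ centered at the constant $U_0$, yielding a unique local solution $U\in C([0,T],D(A))$. A standard continuation argument then provides a maximal existence time $t_{\max}>0$ with $U\in C([0,t_{\max}),D(A))=C([0,t_{\max}),H_{per,m}^3\times H_{per,m}^1)$ and the blow-up alternative $\|U(t)\|_{H_{per,m}^3\times H_{per,m}^1}\to\infty$ as $t\uparrow t_{\max}$ whenever $t_{\max}<\infty$. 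To upgrade this mild solution to a strong one, observe that $s\mapsto F(U(s))$ lies in $C([0,t_{\max}),D(A))$ (since $F\colon D(A)\to D(A)$ is continuous and $U\in C([0,t_{\max}),D(A))$) and that $U_0\in D(A)$; the regularity theory for the inhomogeneous linear equation $U'=AU+v$ with $v\in C([0,t_{\max}),D(A))$ (see \cite[Chapter~4]{pazy}) then yields $U\in C^1([0,t_{\max}),X)=C^1([0,t_{\max}),H_{per,m}^2\times L_{per,m}^2)$ together with $U'(t)=AU(t)+F(U(t))$ for all $t\in[0,t_{\max})$, i.e. $U$ is the desired strong solution of $(\ref{CPKG2})$. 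Uniqueness in the stated class follows directly from the contraction (or, alternatively, from the local Lipschitz bound for $F\colon X\to X$ and Gronwall's inequality applied to the $X$-norm of the difference of two solutions).

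The main — though rather mild — technical obstacle is the twofold bookkeeping of functional settings ($X$ versus $D(A)=H_{per,m}^3\times H_{per,m}^1$) needed to turn the abstract mild solution into a genuine strong solution with the regularity claimed in the lemma: one must establish the Lipschitz estimates for the cubic nonlinearity composed with $\partial_x$ in $H_{per,m}^2$ and in $H_{per,m}^3$ at the same time, and verify (routine but essential) that the zero-mean constraint persists along the flow so that the operator $A$ — in particular $\partial_x^{-1}$ — remains well defined on the orbit. No smallness or sign assumption on the initial data is required for this local result.
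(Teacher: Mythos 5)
Your proposal is correct and follows the same overall strategy as the paper's proof: recast $(\ref{CPKG2})$ as a semilinear evolution equation for the generator $A$ of Lemma $\ref{lemmaC0}$, obtain a mild solution from Banach's fixed point theorem applied to the Duhamel map, and then upgrade it to a strong solution. The one genuine difference is where the contraction is performed. The paper runs the fixed point in $C([0,T],X)$ with $X=H_{per,m}^2\times L_{per,m}^2$, establishing only the $X\to X$ local Lipschitz bound for the nonlinearity $Q(\phi,\psi)=(0,\partial_x(\phi-\phi^3))$, and only afterwards argues that the resulting mild solution lies in $C([0,t_{\rm max}),D(A))\cap C^1([0,t_{\rm max}),X)$; you instead verify that the nonlinearity is locally Lipschitz both on $X$ and on $D(A)=H_{per,m}^3\times H_{per,m}^1$ (via the Banach algebra property of $H_{per}^s$ for $s>\tfrac12$) and run the contraction directly in $C([0,T],D(A))$ with the graph norm. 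Your variant buys a cleaner regularity upgrade: once $F(U(\cdot))\in C([0,t_{\rm max}),D(A))$, the inhomogeneous linear theory in Pazy immediately gives the classical (strong) solution, whereas the paper's claim that $v(t)=\int_0^tS(t-s)Q(U(s))\,ds$ is differentiable is justified there only from the $X$-Lipschitz bound of $Q$ and the continuity of $U$ in $X$, which by itself is not quite sufficient --- the missing ingredient is precisely the $D(A)$-invariance and continuity of the nonlinearity that you establish. Both routes deliver the same conclusion, and your bookkeeping of the two functional settings is sound.
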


\begin{proof}
	
By Lemma $\ref{lemmaC0}$, we have that $A$ is a generator of a $C_0-$semigroup of 
contractions $\{S(t)\}_{t\geq0}$ on $X$. Let us consider $(\phi_0,\psi_0)\in H_{per,m}^3\times H_{per,m}^1=D(A)\subset X$. Initially, we show the existence of $t_{\rm{max}}>0$ and a unique function 
\begin{equation}\label{eq1.lemma}
	U=(\phi,\psi)\in C([0,t_{\rm{max}}),H_{per,m}^2([0,L])\times L_{per,m}^2([0,L])), 
\end{equation}
such that, for $t\in [0,t_{\rm{max}})$, we have that $U(t)$ solves the integral equation
\begin{equation}\label{eq2.lemma}
	U(t)=(\phi(\cdot, t), \psi(\cdot, t))=S(t)(\phi_0,\psi_0)+\int_{0}^{t}S(t-s)(0,\partial_x(\phi(\cdot,s)-\phi^3(\cdot,s)))ds. 
\end{equation}
First, let us define the function $Q:H_{per,m}^2\times L_{per,m}^2\rightarrow H_{per,m}^2\times L_{per,m}^2$ given by $$Q(\phi,\psi)=(0, \partial_x(\phi-\phi^3)).$$ We need show that $Q$ is well-defined. To do so, it suffices to prove that $\partial_x \phi ^3 \in L_{per,m}^2 $ for all $\phi \in H_{per,m}^2$. Indeed, the zero-mean property is clearly satisfied by the periodicity of $\phi$. We also have that if $\phi \in H_{per,m}^2$, then $|\partial_x\phi|^2 \in L_{per,m}^1$. Furthermore, using the embedding $H_{per}^1 \hookrightarrow  L_{per}^p$ for $p\in [1,+\infty]$, we note that $|\phi|^4\in  L_{per,m}^\infty$. It then follows from Hölder's inequality that
\begin{align*}
\int_{0}^{L} |\partial_x \phi ^3 (x)|^2dx=9 \int_{0}^{L} |\phi(x)|^4|\partial_x \phi(x) |^2dx\leq 9 \||\phi|^4\|_{ L_{per}^\infty}\||\partial_x \phi|^2\|_{L_{per}^1}<+\infty,
\end{align*}consequently, $Q$ is well-defined. Next, we prove an important property: let $R>0$ be fixed and suppose that 
 $(\phi_1, \psi_1), \  (\phi_2, \psi_2)\in X=H_{per,m}^2\times L_{per,m}^2$ satisfy $\| (\phi_1, \psi_1)\|_{X}\leq R$ and  $\| (\phi_2, \psi_2)\|_{X}\leq R$. There exists $M=M(L,R)>0$ such that
   \begin{equation}\label{eq7.1.lemma}  
 	\|Q(\phi_1, \psi_1)-Q(\phi_2, \psi_2)\|_X\leq  M \|(\phi_1,\psi_1)-(\phi_2, \psi_2)\|_X.
 \end{equation}
Indeed, we first note that
 \begin{align}\label{eq3.lemma}
 	\|Q(\phi_1, \psi_1)-Q(\phi_2, \psi_2)\|_X\leq\| \phi_1-\phi_2\|_{H_{per}^2}+ \|\partial_x (\phi_1^3-\phi_2^3)\|_{L_{per}^2} .
 \end{align}
A straightforward calculation shows that the second term on the right-hand side of $(\ref{eq3.lemma})$ can be expressed as
 \begin{equation}
 	\|\partial_x (\phi_1^3-\phi_2^3)\|_{L_{per}^2}\leq 3\|\phi_1\|_{L_{per}^\infty}^2 \|\partial_x\phi_1-\partial_x\phi_2 \|_{L_{per}^2}+3\|\partial_x\phi_2\|_{L_{per}^\infty} \| \phi_1 +\phi_2  \|_{L_{per}^\infty} \|\phi_1 -\phi_2  \|_{L_{per}^2}. 
\label{eq5.lemma}
\end{equation}

Using the Sobolev embeddings $H_{per,m}^2 \hookrightarrow H_{per,m}^1\hookrightarrow L_{per,m}^2$ in \eqref{eq5.lemma}, together with the bound by 
$R$, we obtain the existence of a constant $M_1=M_1(L,R)>0$ such that 
 \begin{equation}\label{eq6.lemma}
 		\|\partial_x (\phi_1^3-\phi_2^3)\|_{L_{per}^2}\leq M_1 \|\phi_1 -\phi_2  \|_{H_{per}^2}. 
 		\end{equation}
 		By \eqref{eq3.lemma} and \eqref{eq6.lemma}, there exist a constant $M=M(L,R)>0$ satisfying
 \begin{equation}\label{eq7.lemma}  
 		\|Q(\phi_1, \psi_1)-Q(\phi_2, \psi_2)\|_X\leq  M \|(\phi_1,\psi_1)-(\phi_2, \psi_2)\|_X.
\end{equation} This fact establishes the desired result.% for $M=M_2$. 

For a given $T>0$, let us define the set
\begin{equation}\label{eq8.lemma}  
	\Upsilon=\left\{(\phi,\psi)\in C([0,T], X);\ \sup_{t\in [0,T]}\|(\phi(\cdot,t), \psi(\cdot,t))\|_X\leq 1+\|(\phi_0, \psi_0)\|_X\right\}.
\end{equation}
The set $	\Upsilon$ is a complete metric space because it is closed in $C([0,T],X)$ with the supremum norm. Next, let us consider the mapping $\Psi: \Upsilon\rightarrow\Upsilon$ defined, for each $t\in [0,T]$, by
\begin{equation}\label{eq9.lemma}  
	\Psi(\phi(\cdot,t), \psi(\cdot, t))=S(t)(\phi_0,\psi_0)+\int_{0}^{t}S(t-s)Q(\phi(\cdot,s),\psi(\cdot,s))ds.
\end{equation}
In order to use Banach’s Fixed Point Theorem to prove the existence and uniqueness of the abstract Cauchy problem \eqref{CPKG2}, we show that the function $\Psi$ is well defined on an open ball with radius $r>0$ and that it is a strict contraction. To do so, let us consider $r=1+\|(\phi_0, \psi_0)\|_X>0$ and an arbitrary and fixed 
$(\phi,\psi)\in \Upsilon$. We need to choose $T>0$ to ensure the well-definedness of $\Psi$. In fact, by \eqref{eq7.lemma} we obtain that, for all $t\in [0,T]$, there exists a constant $M_2=M_2(L,r)>0$ such that
\begin{equation}\label{eq10.lemma}  
		\|Q(\phi(\cdot,t), \psi(\cdot,t))-Q(\phi_0, \psi_0)\|_X\leq  M_2 \|(\phi(\cdot,t),\psi(\cdot,t))-(\phi_0, \psi_0)\|_X. 
\end{equation}
By equation \eqref{eq10.lemma} and the fact that $S(t)$ is a $C_0$-semigroup of contractions, we obtain 
\begin{equation}\label{eq11.lemma}  
\|	\Psi(\phi(\cdot,t), \psi(\cdot, t))\|_X\leq \|(\phi_0,\psi_0)\|_X+ M_2T[1+2\|(\phi_0,\psi_0)\|_X]  +T\|(0,\partial_x(\phi_0-\phi^3_0))\|_X. 
\end{equation}
Considering 
\begin{equation}\label{eq13.lemma}  
	0<T^*=\left\{  M_2[1+2\|(\phi_0,\psi_0)\|_X]+\|(0,\partial_x(\phi_0-\phi^3_0))\|_X \right\}^{-1}<+\infty, 
\end{equation}
and by redefining $T$ so that $0<T\leq T^*$, it follows from \eqref{eq11.lemma} and \eqref{eq13.lemma} that for all $t\in [0,T]$ we have
\begin{equation}\label{eq14.lemma}  
	\|	\Psi(\phi(\cdot,t), \psi(\cdot, t))\|_X\leq1+\|(\phi_0,\psi_0)\|_X=r, 
\end{equation}proving the well-definedness of the function $\Psi$.\\
\indent  Next, we prove that $\Psi$ is a strict contraction. It is important to mention that, from now on, if necessary, we redefine $T$ to prove that $\Psi$ is a contraction. To this end, consider $(\phi_1,\psi_1), (\phi_2,\psi_2)\in \Upsilon$. Using a similar argument as in \eqref{eq7.lemma}, we deduce
\begin{equation}\label{eq16.lemma}  
		\|Q(\phi_1, \psi_1)-Q(\phi_2, \psi_2)\|_{C([0,T],X)}\leq  M_2T\|(\phi_1,\psi_1)-(\phi_2, \psi_2)\|_{C([0,T],X)}.
\end{equation} Since $T\leq T^{*}$ and, by $(\ref{eq13.lemma})$, it follows that $T^{*}<\frac{1}{M_2}$, we conclude that $\Psi$ is a strict contraction. Under these conditions, Banach’s Fixed Point Theorem guarantees the existence of a unique function $(\phi, \psi)\in \Upsilon$ such that $\Psi( \phi, \psi)=(\phi, \psi),$ that is, \eqref{eq2.lemma} holds. \\
\indent In what follows, let us consider $t_{\rm{max}}=T^{*}$. Using Gronwall’s inequality, together with the fact that $Q$ satisfies $(\ref{eq7.1.lemma})$, it is possible to verify that the function $(\phi, \psi)\in C([0,t_{\rm{max}}), X)$ above is the unique mild solution of the Cauchy problem on $[0, t_{\rm{max}})$.\\
\indent The next step is to prove that $(\phi,\psi)\in C([0,t_{\rm{max}}),D(A))\cap C^1([0,t_{\rm{max}}),X) $. For that, consider $U_0=(\phi_0, \psi_0)$, $U(t)= (\phi(\cdot, t), \psi(\cdot, t))$ and $ v(t)=\int_{0}^{t}S(t-s)Q(U(s))ds$. Since
\begin{equation}\label{eq17.lemma}  
	U(t)= S(t) U_0+v(t),
\end{equation}we obtain by
\cite[Chapter 1, Theorem 2.4]{pazy} that $v(t)\in D(A)$, $S(t)U_0 \in D(A)$ and 
\begin{equation}\label{eq18.lemma}  
	\frac{d}{dt}S(t) U_0=AS(t)U_0=S(t)AU_0.
\end{equation}
Furthermore, using the property in \eqref{eq7.1.lemma} for the function $Q$, the conditions that $S(t)$ is of class $C_0$, and $ U\in C([0,t_{\rm{max}}),X)$, we prove that $v$ it is differentiable and satisfies
\begin{equation}\label{eq19.lemma}  
		\frac{d}{dt}v(t)=A(v(t))+Q(U(t)).
\end{equation}
From \eqref{eq18.lemma} and \eqref{eq19.lemma} we deduce that \begin{equation}\label{eq20.lemma}  
 	\frac{d}{dt}U(t)= AU(t)+Q(U(t)) ,
\end{equation} and hence, $ U\in C([0,t_{\rm{max}}),D(A))\cap C^1([0,t_{\rm{max}}),X)$ is the unique local (strong) solution of the Cauchy problem $(\ref{CPKG2})$. 
\end{proof}

\noindent \textit{Proof of Theorem $\ref{lwpthm}$.} Let $(\phi_0,\phi_1)\in H_{per,m}^3\times H_{per,m}^2$. Hence $(\phi_0,\psi_0)=(\phi_0,\partial_x \phi_1)\in D(A)$ and by Lemma $\ref{lwpthm1}$ there exists $t_{\rm{max}}>0$ and a unique strong solution\noindent $$(\phi,\psi)\in C([0,t_{\rm{max}}),H_{per,m}^3\times H_{per,m}^1)\cap C^1([0,t_{\rm{max}}),H_{per,m}^2\times L_{per,m}^2)$$ of the Cauchy problem 
$(\ref{CPKG2})$. Given that $(\phi,\psi)$ is a strong solution of equation $(\ref{CPKG2})$, it follows that the pair $(\phi,\psi)$ satisfies the following system of partial differential equations
\begin{equation}\label{sysKG}
\left\{\begin{array}{llll}\phi_t=\partial_x^{-1}\psi,\\\\
\psi_t=\partial_x(\phi_{xx}+\phi-\phi^3).
\end{array}\right.
\end{equation}
\indent By differentiating the first equation in $(\ref{sysKG})$ with respect to $t$, applying the operator $\partial_x^{-1}$ to the second equation, and comparing the results, we find that $\phi$ satisfies the equation 
\begin{equation}\label{zmphi4}
\phi_{tt}-\phi_{xx}-\phi+\phi^3-\frac{1}{L}\int_0^L\phi^3dx=0.
\end{equation}
Also, from $\phi_t=\partial_x^{-1}\psi$, we deduce that  $(\phi_0,\psi_0)=(\phi(0),\psi(0))=(\phi_0,\partial_x\phi_t(0))=(\phi_0,\partial_x\phi_1)$. This demonstrates that $\phi$ is the unique strong solution to the Cauchy problem $(\ref{CPKG3})$ that satisfies $(\phi_0,\partial_x^{-1}\psi_0)=(\phi_0,\phi_1)=(\phi(0),\phi_t(0))$ and $$(\phi,\phi_t)\in C([0,t_{\rm{max}}),H_{per,m}^3\times H_{per,m}^2)\cap C^1([0,t_{\rm{max}}),H_{per,m}^2\times L_{per,m}^2).$$  This concludes the proof of the theorem. 
\begin{flushright}
$\blacksquare$
\end{flushright}
 
\begin{remark}\label{cqEF} We can deduce the two basic conserved quantities in $(\ref{E})$ and $(\ref{F})$ associated with the evolution equation $(\ref{KF2})$. In fact, since $(\phi, \phi_t)\in C^1([0,t_{\rm{max}}),H_{per,m}^2\times L_{per,m}^2)$, we obtain by Lemma \ref{lwpthm1}, after multiplying $(\ref{zmphi4})$ by $\phi_t$ and integrating the final result over $[0,L]$, that
\begin{equation}\label{ded2}
\frac{1}{2}\frac{d}{dt}\int_0^L\left(  \phi_x^2+\phi_t^2-\phi ^2 +\frac{1}{2}\phi^4\right)-\frac{1}{L}\int_0^L\phi^3dx\int_{0}^{L}\phi_tdx=0. 
\end{equation}
 Then, by \eqref{ded2} we have the conserved quantity in $(\ref{E})$. \\
\indent We prove that $\mathcal{F}(\phi,\phi_t)=\displaystyle\int_{0}^{L}\phi_x\phi_t dx$ is also a conserved quantity. Indeed, by \eqref{zmphi4} we have that 
\begin{equation}\label{ded3}\begin{array}{llll}
	\displaystyle\frac{d}{dt}\mathcal{F}(\phi,\phi_t)&=& \displaystyle\int_{0}^{L}(\phi_t \phi_{tx}+ \phi_x \phi_{tt}) dx \\\\ &= &\displaystyle\int_{0}^{L}\left(\phi_t \phi_{tx}+ \phi_x \phi_{xx}+\phi_x \phi-\phi_x \phi^3+\phi_x\frac{1}{L}\int_0^L\phi^3dx\right) dx \\\\ &= &\displaystyle\int_{0}^{L}\left(\frac{1}{2}\frac{d}{dx} \phi_t ^2
	+ \frac{1}{2}\frac{d}{dx} \phi_x ^2+\frac{1}{2}\frac{d}{dx} \phi ^2-\frac{1}{4}\frac{d}{dx} \phi ^4+\phi_x\frac{1}{L}\int_0^L\phi^3dx\right)dx.
\end{array}\end{equation}
Due to the periodicity of the functions $\phi$, $\phi_x$, and $\phi_t$, we obtain by $(\ref{ded3})$ that $\mathcal{F}$ is conserved quantity.
\end{remark}
 
 \begin{remark}\label{gwp}
  We prove that the local solution 
 $(\phi,\phi_t)$ in Theorem $\ref{lwpthm}$ extends globally in $Y$. By Remark \ref{cqEF}, and using Young’s inequality, we obtain that
 \begin{equation}\label{est2}\begin{array}{lllll}
 		\displaystyle\int_0^L[\phi_x(x,t)^2+\phi_t(x,t)^2]dx&=&2\mathcal{E}(\phi(t),\phi_t(t))+\displaystyle\int_0^L\left[\phi (x,t)^2 -\frac{1}{2}\phi(x,t)^4\right]dx\\\\
 		&=&2\mathcal{E}(\phi_0,\phi_1)+\displaystyle\int_0^L\left[\phi (x,t)^2 -\frac{1}{2}\phi(x,t)^4\right]dx \\\\
 		 &\leq &2\displaystyle\mathcal{E}(\phi_0,\phi_1)+\frac{L}{2}.
 	\end{array}
 \end{equation}
 This implies that $(\phi,\phi_t)\in L^{\infty}([0,+\infty),Y)$. In other words, the strong solution $(\phi,\phi_t)$ of the Cauchy problem $(\ref{CPKG3})$ is global in time in $H_{per,m}^1\times L_{per,m}^2$. 
  \end{remark}

\noindent \textit{Proof of Theorem $\ref{lwpthm2}$.} Let $(\phi_0,\phi_1)\in H_{per,m}^1\times L_{per,m}^2$. By density, there exists a sequence $\{ (\phi_{0,n},\phi_{1,n})\}_{n\in\mathbb{N}} \subset H_{per,m}^3\times H_{per,m}^2$ such that
$(\phi_{0,n},\phi_{1,n})$ converges to $ (\phi_0,\phi_1)  $ in $H_{per,m}^1\times L_{per,m}^2$. For the regular initial data $(\phi_{0,n},\phi_{1,n})$, we have by Theorem $\ref{lwpthm}$ the corresponding sequence of solutions 
\begin{align} \label{teo.fraco.eq1}
	(\phi_n, \phi_{t,n})\in  C([0,t_{\rm{max}}),H_{per,m}^3\times H_{per,m}^2)\cap C^1([0,t_{\rm{max}}),H_{per,m}^2\times L_{per,m}^2),
\end{align}so that 
\begin{align} \label{teo.fraco.eq2}
	\phi_{tt,n}-\phi_{xx,n}-\phi_n+\phi_n^3-\frac{1}{L}\int_0^L\phi_n^3dx=0, \  {\rm{in}} \: [0,L]\times(0,t_{\rm{max}}).
\end{align}
\indent In addition, by Remark $\ref{gwp}$, we see that the pair $(\phi_n,\phi_{t,n})$ is bounded and global in time in the space $Y$. To simplify the notation, let us denote $u=\phi_n$, $v=\phi_r$, $u_t=\phi_{t,n}$ and $v_t=\phi_{t,r}$. Under these conditions, defining $w=u-v$, we have that
\begin{align} \label{teo.fraco.eq4}
	w_{tt}-w_{xx}-w+w(u^2+uv+v^2)-\frac{1}{L}\int_0^Lw(u^2+uv+v^2)dx=0.
\end{align}
Multiplying $\eqref{teo.fraco.eq4}$ by $w_t$ and integrating in $x$ over the interval $[0,L]$, we see that the last term in the calculation cancels because $w_t$ has zero mean. Thus, it follows that
\begin{align} \label{teo.fraco.eq5}
\frac{1}{2}\frac{d}{dt}	\displaystyle\int_0^L(w_t^2+w_x^2)dx\leq  \frac{1}{2}\frac{d}{dt}\int_0^Lw^2dx +\int_0^L|ww_t||u^2+uv+v^2|dx.
\end{align}
\indent Moreover, using Young's inequality in \eqref{teo.fraco.eq5} and integrating the result over the interval $[0,t]\subset[0,t_{\rm{max}})$, we obtain
\begin{equation} \label{teo.fraco.eq6}\begin{array}{lllll}
\displaystyle	\frac{1}{2} 	\displaystyle\int_0^L(w_t^2+w_x^2)dx&\leq &\displaystyle \frac{1}{2}\int_0^Lw^2dx  +	\frac{1}{2} 	\displaystyle\int_0^L(w_{t,0}^2+w_{x,0}^2)dx \\\\
	&+&\displaystyle	\frac{1}{2}\int_0^t\int_0^Lw^2|u^2+uv+v^2|dxdt\\\\
	&+&\displaystyle	\frac{1}{2}\int_0^t\int_0^Lw_t^2|u^2+uv+v^2|dxdt. 
\end{array}
\end{equation} By Remark $\ref{gwp}$ and using the embedding $H_{per,m}^1\hookrightarrow L_{per,m}^{\infty}$, we guarantee the existence of a constant $M_3>0$ depending on $L$ such that $ \|u^2+uv+v^2\|\displaystyle_{L_{per,m}^\infty}\leq M_3$. Using Fourier series together with Parseval's identity, we can also prove the Poincaré–Wirtinger inequality in $H_{per,m}^1$ as follows:
\begin{align}  \label{PWineq}
	\frac{1}{2}\int_0^Lw^2dx\leq 	\frac{1}{2}\biggl(\frac{L}{2\pi}\biggl)^2\int_0^Lw_x^2dx.
\end{align} By \eqref{teo.fraco.eq6} and \eqref{PWineq}, it follows that
\begin{align} \label{teo.fraco.eq8}
	\begin{array}{lllll}
		\displaystyle	\frac{1}{2} \int_0^L(w_t^2+w_x^2)dx&\leq &\displaystyle \frac{1}{2}\biggl(\frac{L}{2\pi}\biggl)^2\int_0^Lw_x^2dx +	\frac{1}{2} 	\displaystyle\int_0^L(w_{t,0}^2+w_{x,0}^2)dx \\\\
		&+&\displaystyle	\frac{M_3}{2}\biggl(\frac{L}{2\pi}\biggl)^2 \int_0^t \int_0^Lw_x^2dxdt+	\frac{M_3}{2}\int_0^t\int_0^Lw_t^2 dxdt. 
	\end{array}
\end{align}For $L\in (0,2\pi)$, let us consider $M_4=\displaystyle \biggl[ 1-\biggl(\frac{L}{2\pi}\biggl)^2\biggl]<1$ %\displaystyle\min\biggl\{\frac{1}{2}, \frac{1}{2}\biggl[ 1-\biggl(\frac{L}{2\pi}\biggl)^2\biggl]\biggl\}>0$ and $M_5=\displaystyle\frac{M_3}{2}$ %\displaystyle\max\biggl\{\frac{1}{2}\biggl(\frac{L}{2\pi}\biggl)^2\tilde{C}, \frac{1}{2}\tilde{C}\biggl\}>0$ 
 in \eqref{teo.fraco.eq8}. Thus, 
\begin{align} \label{teo.fraco.eq9}	\begin{array}{lllll}
	 	\displaystyle\int_0^L(w_t^2+w_x^2)dx\leq\displaystyle  	\frac{1}{M_4} 	\displaystyle\int_0^L(w_{t,0}^2+w_{x,0}^2)dx +\displaystyle	\frac{M_3}{M_4} \int_0^t \int_0^L(w_t^2+w_x^2)dxdt . 
	\end{array}
\end{align}Applying Gronwall's inequality to \eqref{teo.fraco.eq9}, we conclude
\begin{align} \label{teo.fraco.eq10}\displaystyle\int_0^L(w_t^2+w_x^2)dx\leq\frac{1}{M_4} 	\displaystyle\biggl[\int_0^L(w_{t,0}^2+w_{x,0}^2)dx\biggl] e^{\frac{M_3}{M_4}T},
\end{align} where $T>0$ is arbitrary, but fixed. Since $w=u-v$, \eqref{teo.fraco.eq10} shows that $	(\phi_n, \phi_{t,n})$ is a Cauchy sequence in $L^\infty([0,T],Y)$. 
Hence, there exists $(\phi, \phi_t) \in L^\infty([0,T],Y)$ such that 
\begin{equation}  \label{teo.fraco.eq11}
	(\phi_n, \phi_{t,n})\rightarrow (\phi, \phi_t) \quad {\rm{in}} \quad L^\infty([0,T],Y).  
\end{equation}
\indent Defining $U_0=(\phi_0,\phi_1)$, $U=(\phi, \phi_{t})$, $U_{n}=(\phi_n, \phi_{t,n})$, $U_{0,n}=(\phi_{0,n}, \phi_{1,n})$, and using the conserved quantity $\mathcal{E}$ in $(\ref{E})$, together with the convergence in \eqref{teo.fraco.eq11}, we obtain
\begin{align}\label{teo.fraco.eq12}
\begin{array}{ccc}
	\mathcal{E}(U_n(t)) & = & \mathcal{E}(U_{0,n}) \\
	\mathrel{\downarrow} & & \mathrel{\downarrow} \\
	\mathcal{E}(U(t)) & = & \mathcal{E}(U_0)
\end{array}
\end{align}
By applying the convergences in \eqref{teo.fraco.eq12} and the arguments in \cite[Lemma 2.4.4]{cazenave}, we establish that $	(\phi, \phi_{t})\in  C([0,T],Y)$ for all $T>0$. Furthermore, by standard arguments of passage to the limit, one can show that $(\phi, \phi_t)$ is a weak solution of $\eqref{CPKG}$ in $C([0,T],Y)$ with initial data $(\phi_0,\phi_1)\in Y$ provided that $L\in (0,2\pi)$.\\
\indent Inequality $(\ref{teo.fraco.eq10})$ and the fact that $w = u - v$ also imply that the weak solution is unique in $C([0,T], Y)$. In addition, returning to the estimate $(\ref{est2})$, but now using the weak solution $(\phi, \phi_t) \in C([0,T], Y)$ instead of strong solution, we conclude that in fact $T = +\infty$, so that $(\phi, \phi_t) \in C([0,+\infty), Y)$, as stated in Theorem $\ref{lwpthm2}$.
\begin{flushright}
	$\blacksquare$
\end{flushright}

\begin{remark}\label{remweaksol} Just to make clear that our notion of global weak solution mentioned in Theorem $\ref{lwpthm2}$ reads as follows: we say that $\phi$ is a global weak solution for the problem $(\ref{CPKG})$ if for all $\mathsf{p}\in H_{per,m}^1$, we have 
	$$
	\langle \phi_{tt}(\cdot,t), \mathsf{p}\rangle_{H^{-1}_{per,m},H_{per,m}^1} + \int_{0}^{L} \phi_x(x,t)\mathsf{p}_x(x) dx-\int_{0}^{L} \phi(x,t)\mathsf{p}(x)dx + \int_{0}^{L} \phi(x,t)^3\mathsf{p}(x)dx = 0,
	$$
	a.e. $t \in [0,+\infty)$. It is important to mention that, since $\mathsf{p}\in H_{per,m}^1$, we have $$\frac{1}{L}\int_0^L\phi(x,t)^3dx\int_0^L\mathsf{p}(x)dx=0,$$ and therefore the term $\displaystyle\int_0^L \phi(x,t)^3dx$ that appears in \eqref{zmphi4} does not intervene in the definition of the weak solution, since its inner product with a zero-mean function in $L_{per}^2$ is zero.
	\end{remark}

\section{Existence of periodic waves and spectral analysis}\label{section4}

\subsection{Existence of periodic waves}

Substituting the traveling wave
solution of the form $\phi(x,t)=h_c(x-ct)$ into $(\ref{KF2})$, one has
\begin{equation}
c^2h''-h''-h+h^3=0.
\label{travSG}\end{equation} Since $\omega=\omega(c)=1-c^2>0$ for $c\in (-1,1)$, we
obtain by $(\ref{travSG})$ the following second order ordinary differential equation
\begin{equation}\label{travSG1}
-\omega h''-h+h^3=0.
\end{equation}
\indent Consider the ansatz of snoidal type
\begin{equation}
h(x)=a\mbox{sn}\left(bx;k\right).
	\label{travSG2}\end{equation} 
Here, $k\in (0,1)$ is the modulus of the elliptic function and the constants $a,b\in \mathbb{R}$ necessitate to be determined. We need to use some useful properties associated with the Jacobi elliptic functions (for details see \cite{byrd}). Indeed, differentiating \eqref{travSG2}, we see that
\begin{align} 	\label{travSG4}
	h''(x)=&-ab^2\mbox{sn}\left(bx;k\right)[\mbox{dn}^2\left(bx;k\right)+k^2\mbox{cn}^2\left(bx;k\right)] . \end{align} 
 Using the identities $\mbox{dn}^2=1-k^2\mbox{sn}^2$ and $\mbox{cn}^2=1-\mbox{sn}^2$ in \eqref{travSG4}, it follows that
 \begin{align} \label{travSG5}
 	h''(x)=&-ab^2\mbox{sn}\left(bx;k\right)[1+k^2-2k^2\mbox{sn}^2\left(bx;k\right)] . \end{align} 
 \indent On the other hand, substituting \eqref{travSG2} and \eqref{travSG5} into \eqref{travSG1}, we have that
  \begin{align} \label{travSG6}
a [b^2\omega(1+k^2)-1]\mbox{sn}\left(bx;k\right)+a[a^2-2b^2k^2\omega ] \mbox{sn}^3\left(bx;k\right)=0. \end{align} 
Consider $a\neq 0$. By \eqref{travSG6}, we can suppose that $a$ and $b$ satisfy
 \begin{align} \label{travSG7}
 b^2 =\frac{1}{\omega(1+k^2)} \quad  {\rm and} \quad  a^2=2b^2k^2\omega . \end{align}Taking into account the positive roots, we obtain an explicit solution $h$ for \eqref{travSG1} given in terms of the Jacobi elliptic functions as
\begin{equation}
	h_\omega(x)=\frac{\sqrt{2}k}{\sqrt{k^2+1} }\mbox{sn}\left(\frac{1}{\sqrt{\omega(1+k^2)}}\; x;k\right).
	\label{travSG8}\end{equation}

In addition, since the Jacobi elliptic function of snoidal kind is periodic with real period equal to $4K(k)$,
we automatically have
\begin{equation}\label{speed1}
\frac{1}{\sqrt{\omega}}=\frac{4K(k)\sqrt{1+k^2}}{L}.
\end{equation}
\indent At this point, we must ensure that, for fixed $L\in(0,2\pi)$, the condition $\omega\in (0,1)$. Since the complete elliptic integral of the first kind is given by $K(k)=\displaystyle\int_{0}^{\frac{\pi}{2}}\frac{1}{\sqrt{1-k^2\sin^2(\theta)}}d\theta,$ we know that $K(k)>\frac{\pi}{2}$. Combining this with the assumption on $k\in (0,1)$, it follows from the relation in \eqref{speed1} that
\begin{equation}\label{speed2}
0<\omega=\frac{L^2}{16K^2(k)(1+k^2)}<\frac{4\pi^2}{4\pi^2 }=1.
\end{equation}
Furthermore, from $(\ref{speed1})$ we deduce that
$\frac{dk}{d\omega}>0$ and by implicit function theorem, we
get
$\omega\in\left(0,\frac{L^2}{4\pi^2}\right)\mapsto h_\omega\in
H_{per,m}^\infty([0,L])$ is smooth.

We can enunciate the following
result.

\begin{proposition}\label{prop.WS}
Let $L\in(0,2\pi)$ be fixed. There exists a smooth curve of
periodic traveling wave solutions for the equation $(\ref{travSG1})$
with $\omega=1-c^2$, $\omega>0$, given by

\begin{equation}
\displaystyle
c\in\left(-\sqrt{1-\frac{L^2}{4\pi^2}},\sqrt{1-\frac{L^2}{4\pi^2}}\right)
\mapsto h_{c}=h_{\omega(c)}\in H_{per,m}^\infty([0,L]),\label{curveeqSG1}\end{equation}
where
\begin{equation} 
	 h_{c}(x)=\frac{\sqrt{2}k}{\sqrt{k^2+1} }{\rm{sn}}\left(\frac{1}{\sqrt{(1-c^2)(1+k^2)}}\;x;k\right).\label{existSG1}
\end{equation}  
\end{proposition}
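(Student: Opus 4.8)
The plan is to follow the classical recipe for constructing periodic travelling waves: reduce the profile equation \eqref{travSG1} to an algebraic system by inserting a Jacobi-elliptic ansatz, enforce $L$-periodicity to fix the parameters in terms of $k$ and $L$, and then upgrade the resulting one-parameter family to a smooth curve by the implicit function theorem. First I would substitute $h(x)=a\,\mbox{sn}(bx;k)$ into \eqref{travSG1}. Differentiating twice with the differentiation rules for $\mbox{sn}$ and using the Jacobi identities $\mbox{dn}^2=1-k^2\mbox{sn}^2$ and $\mbox{cn}^2=1-\mbox{sn}^2$, the term $h''$ reduces to a linear combination of $\mbox{sn}(bx;k)$ and $\mbox{sn}^3(bx;k)$; plugging this back into \eqref{travSG1} produces an identity $\alpha\,\mbox{sn}(bx;k)+\beta\,\mbox{sn}^3(bx;k)\equiv 0$. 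Since $\mbox{sn}$ and $\mbox{sn}^3$ are linearly independent (compare, e.g., the Taylor expansions at $x=0$, where $\mbox{sn}(bx;k)=bx+O(x^3)$), both coefficients must vanish, which gives the system $b^2=\frac{1}{\omega(1+k^2)}$ and $a^2=2b^2k^2\omega$; taking positive roots yields the explicit profile \eqref{travSG8}.

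Next I would impose $L$-periodicity. As $\mbox{sn}(\cdot;k)$ has fundamental real period $4K(k)$, the function in \eqref{travSG8} is $L$-periodic precisely when $bL=4K(k)$; substituting the value of $b$ gives the relation \eqref{speed1}, equivalently $\omega=\frac{L^2}{16K(k)^2(1+k^2)}$. Using the elementary bound $K(k)>\pi/2$ valid for $k\in(0,1)$ together with the hypothesis $L<2\pi$, one reads off that $0<\omega<1$, so the wave speed $c=\pm\sqrt{1-\omega}$ is real and lies in $(-1,1)$, in accordance with $\omega=1-c^2$. Moreover, because $\mbox{sn}$ is odd, each such profile $h$ is odd and hence has zero mean over a period, so it belongs to $H_{per,m}^\infty([0,L])$ by smoothness of $\mbox{sn}$.

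Finally, to obtain a \emph{smooth} curve rather than just a set of solutions, I would apply the implicit function theorem to $G(\omega,k):=\frac{1}{\sqrt{\omega}}-\frac{4K(k)\sqrt{1+k^2}}{L}$. Since $k\mapsto K(k)$ is smooth and strictly monotone on $(0,1)$, the partial derivative $\partial_k G$ does not vanish on the relevant parameter range, so \eqref{speed1} determines $k$ as a smooth function of $\omega$; composing with the smooth map $c\mapsto\omega=1-c^2$ and with the smooth dependence of $\mbox{sn}$ and of $a,b$ on $(\omega,k)$, I would conclude that $c\mapsto h_c$ is a smooth map into $H_{per,m}^\infty([0,L])$, given by the closed formula \eqref{existSG1}.

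The computation carries no serious obstruction; the two points that require care are (i) performing the reduction of $h''$ to a cubic polynomial in $\mbox{sn}$ correctly and justifying the coefficient matching via the linear independence of $\{\mbox{sn},\mbox{sn}^3\}$, and (ii) pinning down the admissible range of $\omega$ (hence of $c$) and verifying $\partial_k G\neq 0$, so that the implicit function theorem genuinely yields a smooth — and injective — parametrization. This second point is exactly where the monotonicity properties of the complete elliptic integral $K$ do the work.
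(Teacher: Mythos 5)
Your proposal is correct and follows essentially the same route as the paper: the snoidal ansatz, reduction of $h''$ via the Jacobi identities to match coefficients of $\mbox{sn}$ and $\mbox{sn}^3$, the period condition $bL=4K(k)$ yielding \eqref{speed1}, the bound $K(k)>\pi/2$ to control $\omega$, and the implicit function theorem applied to the relation between $\omega$ and $k$ (your explicit verification that $\partial_k G\neq0$ and the linear-independence justification for the coefficient matching are slightly more detailed than the paper's). Note only that, like the paper's own derivation, your argument establishes $0<\omega<1$ and hence $c\in(-1,1)$, but does not by itself pin down the precise interval of wave speeds appearing in \eqref{curveeqSG1}; that requires observing that solvability of \eqref{speed1} for $k\in(0,1)$ forces $\omega<L^2/(4\pi^2)$.
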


\begin{flushright}
 ${\blacksquare}$
\end{flushright}

\begin{remark}
 A \textit{kink wave} solution of equation $(\ref{travSG})$ can be obtained by analyzing certain asymptotic properties of the snoidal Jacobi elliptic function given in $(\ref{existSG1})$.  Indeed, let $0<|c|<1$ be fixed. Since
$\mbox{sn}(\cdot,1^{-})\approx \tanh(\cdot)$, then the function
\begin{equation}
h_{c}(y)=\tanh\left(\frac{y}{\sqrt{2(1-c^2)}}\right), \label{kink1} \end{equation}
is an explicit kink wave solution for the equation $(\ref{travSG})$. The anti-kink solution is then given by $h_{c}(y)=\displaystyle-\tanh\left(\frac{y}{\sqrt{2(1-c^2)}}\right)$.
\end{remark}

\subsection{Spectral analysis for the $\phi^4-$equation}\label{spectralKG} 

 Let $L\in (0,2\pi)$ be fixed and consider $c \in (-1,1)$. The main objective of this section is to study the non-positive spectrum of the  operator $\mathcal{L}:H^2_{per}([0,L])\times H^1_{per}([0,L]) \subset\mathbb{L}^2_{per}([0,L])\rightarrow \mathbb{L}^2_{per}([0,L])$ defined in \eqref{matrixop313}. \\
\indent First, by $(\ref{travSG})$, we see that $h' \in \Ker(\mathcal{L}_1)$, where $\mathcal{L}_1$ is the linear operator defined as
\begin{equation}\label{opL1}
\mathcal{L}_1=-\omega\partial_x^2-1+3h^2.
\end{equation}
In addition, since $h$ is odd, then $h'$ has exactly two zeros in the half-open interval $[0,L )$. This implies from \cite[Theorem 3.1.2]{eas} that zero is the second or the third eigenvalue of  $\mathcal{L}_1$ (see also \cite{magnus} for additional results). The next result gives us, in fact, that zero is also the second eigenvalue of the operator $\mathcal{L}$ which is simple.

\begin{lemma}\label{2eigenvalues}  Let $L\in (0,2\pi)$ be fixed.
The operator $\mathcal{L}$ in \eqref{matrixop313} has exactly one negative eigenvalue which is simple. Zero is a simple eigenvalue with associated eigenfunction $(h',ch'')$. In addition, the rest of the
spectrum is constituted by a discrete set of eigenvalues.
\end{lemma}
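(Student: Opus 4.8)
The plan is to reduce the spectral analysis of the matrix operator $\mathcal{L}$ to that of the scalar Hill operator $\mathcal{L}_1=-\omega\partial_x^2-1+3h^2$, to pin down the low–lying spectrum of $\mathcal{L}_1$ using two explicit Lam\'e–type eigenfunctions together with the oscillation theorem already quoted from \cite{eas}, and finally to transfer the result back to $\mathcal{L}$.

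The key tool is a quadratic–form identity. For every $(p,q)\in H^1_{per}\times L^2_{per}$, integrating by parts and completing the square in $q$ gives
\begin{equation}\label{formident}
\bigl(\mathcal{L}(p,q),(p,q)\bigr)_{\mathbb{L}^2_{per}}=(\mathcal{L}_1 p,p)_{L^2_{per}}+\|q-c\,p_x\|_{L^2_{per}}^2,
\end{equation}
while at the operator level $(p,q)\in\Ker(\mathcal{L})$ if and only if $q=c\,p_x$ and $\mathcal{L}_1 p=0$. Granting for the moment that $\mathcal{L}_1$ has exactly one negative eigenvalue (simple) and that $\Ker(\mathcal{L}_1)=\mathrm{span}\{h'\}$, the transfer is routine. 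On any subspace $W$ of the form domain on which $(\mathcal{L}\cdot\,,\cdot)$ is negative definite the projection $(p,q)\mapsto p$ is injective (otherwise some $(0,q)\in W$ with $q\neq0$ would give $(\mathcal{L}(0,q),(0,q))=\|q\|^2>0$), so by \eqref{formident} it maps $W$ isomorphically onto a subspace on which $(\mathcal{L}_1\cdot\,,\cdot)<0$; conversely $p\mapsto(p,c\,p_x)$ sends a negative subspace of $\mathcal{L}_1$ to one of $\mathcal{L}$ of the same dimension. Hence $n(\mathcal{L})=n(\mathcal{L}_1)=1$, which is exactly the statement that $\mathcal{L}$ has a single simple negative eigenvalue, and the kernel characterization gives $\Ker(\mathcal{L})=\mathrm{span}\{(h',c\,h'')\}$, so $0$ is a simple eigenvalue with eigenfunction $(h',c\,h'')$.

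The heart of the argument — and the step I expect to be the main obstacle — is the spectral picture for $\mathcal{L}_1$. The text already establishes, from $h'\in\Ker(\mathcal{L}_1)$ and the fact that $h'$ has exactly two zeros in $[0,L)$, that $0$ is the second or the third periodic eigenvalue of $\mathcal{L}_1$; I must exclude the third alternative and prove simplicity. For this I would exhibit a second explicit eigenfunction. Writing $h(x)=a\,\sn(bx;k)$ with $b=4K(k)/L$, $b^2=1/(\omega(1+k^2))$ and $a^2=2k^2/(1+k^2)$, the substitution $y=bx$ turns $\mathcal{L}_1$ into the affinely rescaled $n=2$ Lam\'e operator $\tfrac{1}{1+k^2}\bigl[-\partial_y^2+6k^2\sn^2(y;k)\bigr]-1$, and the standard derivative identities for the Jacobi functions give
\begin{equation}\label{twoeig}
\mathcal{L}_1\bigl(\cn(bx;k)\,\dn(bx;k)\bigr)=0,\qquad
\mathcal{L}_1\bigl(\sn(bx;k)\,\dn(bx;k)\bigr)=\frac{3k^2}{1+k^2}\,\sn(bx;k)\,\dn(bx;k).
\end{equation}
The first eigenfunction is $h'$ (up to the constant $ab$); the second, $\sn(b\,\cdot\,;k)\dn(b\,\cdot\,;k)$, is $L$–periodic, odd, has exactly two zeros in $[0,L)$ — the zeros of $\sn$ — and carries the strictly positive eigenvalue $3k^2/(1+k^2)$. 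By the same oscillation theorem, the only periodic eigenvalues whose eigenfunctions have exactly two zeros in a period are $\lambda_1$ and $\lambda_2$; hence $\{\lambda_1,\lambda_2\}=\{0,\,3k^2/(1+k^2)\}$, and these are distinct for $k\in(0,1)$. Therefore $\lambda_1=0<\lambda_2=3k^2/(1+k^2)$, both simple, so $\Ker(\mathcal{L}_1)=\mathrm{span}\{h'\}$, and since the ground–state eigenvalue $\lambda_0$ of a Hill operator lies strictly below $\lambda_1$ we conclude $\lambda_0<0$ and $n(\mathcal{L}_1)=1$. (The same conclusion is accessible from the classical band–edge description of the $n=2$ Lam\'e operator, or from the analysis of $\mathcal{L}_1$ carried out in \cite{loreno} and \cite{palacios}.)

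It remains to describe the spectrum as a whole. The operator $\mathcal{L}$ is self-adjoint on $\mathbb{L}^2_{per}$ with domain $H^2_{per}\times H^1_{per}$: its diagonal entries $-\partial_x^2-1+3h^2$ and $1$ are self-adjoint and its off-diagonal entries satisfy $(c\partial_x)^*=-c\partial_x$. Since multiplication by the smooth bounded potential $3h^2$ is a relatively compact perturbation of the constant–coefficient operator $\mathcal{L}_0$ obtained by setting $h\equiv0$, Weyl's theorem gives $\sigma_{\mathrm{ess}}(\mathcal{L})=\sigma_{\mathrm{ess}}(\mathcal{L}_0)$, and a direct Fourier computation shows $\sigma_{\mathrm{ess}}(\mathcal{L}_0)=\{1-c^2\}\subset(0,\infty)$. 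Consequently the part of $\sigma(\mathcal{L})$ in $(-\infty,1-c^2)$ — in particular the unique negative eigenvalue and the simple eigenvalue $0$ — is a discrete set of eigenvalues of finite multiplicity, and the remainder of $\sigma(\mathcal{L})$ likewise consists of isolated eigenvalues of finite multiplicity (with $1-c^2$ as its only finite accumulation point), which is the asserted structure of the spectrum.
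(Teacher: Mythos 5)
Your argument is correct, but it is genuinely more than what the paper does: the paper's entire proof of this lemma is the citation ``See \cite[Proposition 3.8]{loreno}'', so you have supplied the self-contained argument that the paper outsources. Your three ingredients all check out. The quadratic-form identity $(\mathcal{L}(p,q),(p,q))_{\mathbb{L}^2_{per}}=(\mathcal{L}_1 p,p)_{L^2_{per}}+\|q-cp_x\|^2_{L^2_{per}}$ follows from one integration by parts and completing the square (the cross terms combine to $-2c\int p_x q$, and $q^2-2cp_xq=(q-cp_x)^2-c^2p_x^2$ regenerates $\omega=1-c^2$ in front of $p_x^2$), and together with the min--max characterization of the Morse index it gives $n(\mathcal{L})=n(\mathcal{L}_1)$ and $\Ker(\mathcal{L})=\{(p,cp'):\ \mathcal{L}_1p=0\}$ exactly as you say. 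Your Lam\'e computation is also right: with $a^2=2k^2/(1+k^2)$ and $b^2=1/(\omega(1+k^2))$ one gets $\mathcal{L}_1=\tfrac{1}{1+k^2}[-\partial_y^2+6k^2\sn^2(y;k)]-1$, the function $\sn(b\cdot;k)\dn(b\cdot;k)$ is $L$-periodic with exactly two zeros per period and eigenvalue $3k^2/(1+k^2)>0$, so Haupt's oscillation theorem forces $\{\lambda_1,\lambda_2\}=\{0,3k^2/(1+k^2)\}$, whence $\lambda_1=0$ is simple and $\lambda_0<0$. The Weyl argument for $\sigma_{\mathrm{ess}}(\mathcal{L})=\{1-c^2\}$ is the standard one (the lower branch of the Fourier symbol of the constant-coefficient operator accumulates at $\omega$, and the potential perturbs only the first component, which gains regularity through the resolvent). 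What your route buys is independence from \cite{loreno}; what the paper's citation buys is brevity, since \cite[Proposition 3.8]{loreno} establishes essentially this same reduction-plus-Lam\'e analysis. The only cosmetic caveat is that the point $1-c^2$ itself is essential spectrum rather than an isolated eigenvalue, so the lemma's phrase ``the rest of the spectrum is a discrete set of eigenvalues'' should be read, as you correctly note, as discreteness away from the single finite accumulation point $1-c^2>0$ --- which is all the stability argument needs.
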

\begin{proof}
See \cite[Proposition 3.8]{loreno}.
\end{proof}
 
 \indent Before studying the spectral information concerning operator $\mathcal{L}_{\Pi}$ in $(\ref{opconstrained2})$, we need to establish some basic facts. In fact, consider the constrained space $S_1= [1] \subset \Ker(\mathcal{L}_1)^{\perp}=[h']^{\bot}$, which is associated with the auxiliary linear operator $$\mathcal{L}_{1\Pi}=\mathcal{L}_1-\frac{3}{L}(h^2,\cdot)_{L_{per}^2}.$$ Let us define the number
 $
 D_1=(\mathcal{L}_1^{-1} 1,1)_{L^2_{per}}.
 $
 We can use the Index Theorem for self-adjoint operators in \cite[Theorem 5.3.2]{kapitula} and \cite[Theorem 4.1]{pel-book}, to obtain the exact quantity of negative eigenvalues and the dimension of the kernel of $\mathcal{L}_{1\Pi}$. Indeed, since $\Ker(\mathcal{L}_1)=[h']$, one has 
 \begin{equation}\label{indexformula12}
 	\text{n}(\mathcal{L}_{{1\Pi}})=\text{n}(\mathcal{L}_1)-{\rm n}_0-{\rm z}_0
 \end{equation}
 and
 \begin{equation}\label{indexformula123}
 	\text{z}(\mathcal{L}_{{1\Pi}})=\text{z}(\mathcal{L}_1)+{\rm z}_0,
 \end{equation}
 where $\text{n}(\mathcal{A})$ and $\text{z}(\mathcal{A})$ denote the number of negative eigenvalues and the dimension of the kernel of a certain linear operator $\mathcal{A}$ (counting multiplicities). In addition, the numbers ${\rm n}_0$ and ${\rm z}_0$ are defined respectively as
 \begin{equation}\label{n0z0}
 	{\rm n}_0=
 	\begin{cases}
 		1, \: \text{if} \: D_1<0, \\
 		0, \: \text{if} \: D_1 \geq 0,\ \
 	\end{cases}
 	\quad \text{and} \quad\ 
 	{\rm z}_0=
 	\begin{cases}
 		1, \: \text{if} \: D_1=0, \\
 		0, \: \text{if} \: D_1 \neq 0.
 	\end{cases}
 \end{equation}
 
\indent The following result provides the precise spectral information of the operator $\mathcal{L}_{\Pi}$.% in $(\ref{opconstrained2})$. 

\begin{prop}\label{leman1}
 Let $L\in (0,2\pi)$ be fixed. The linear operator $\mathcal{L}_{\Pi}$ in $(\ref{opconstrained2})$ has no negative eigenvalues and $(h',ch'')$ is a simple eigenfunction associated with the zero eigenvalue. 
\end{prop}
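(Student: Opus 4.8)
The plan is to transfer the spectral count from the matrix operator $\mathcal{L}_\Pi$ to the scalar operator $\mathcal{L}_{1\Pi}$, invoke the index identities $(\ref{indexformula12})$--$(\ref{indexformula123})$, and then settle the sign of $D_1$ by exhibiting an explicit preimage of the constant function under $\mathcal{L}_1$. The first step is the elementary identity obtained by completing the square in the second slot,
\[
(\mathcal{L}(p,q),(p,q))_{\mathbb{L}^2_{per}}=(\mathcal{L}_1 p,p)_{L^2_{per}}+\|q-cp_x\|_{L^2_{per}}^2 ,
\]
valid on $H^1_{per}\times L^2_{per}$. Since $p\in H^1_{per,m}$ forces $p_x\in L^2_{per,m}$ by periodicity, the map $(p,q)\mapsto (p,q-cp_x)$ is a linear isomorphism of $H^1_{per,m}\times L^2_{per,m}$ onto itself; moreover, on this subspace the quadratic forms of $\mathcal{L}$ and $\mathcal{L}_\Pi$, resp. of $\mathcal{L}_1$ and $\mathcal{L}_{1\Pi}$, coincide, as recorded in $(\ref{positiveL})$. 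Hence Sylvester's law of inertia gives $\text{n}(\mathcal{L}_\Pi)=\text{n}(\mathcal{L}_{1\Pi})$, $\text{z}(\mathcal{L}_\Pi)=\text{z}(\mathcal{L}_{1\Pi})$ and $\ker\mathcal{L}_\Pi=\{(p,cp_x):p\in\ker\mathcal{L}_{1\Pi}\}$. Applying the same identity to $\mathcal{L}$ on the full space together with Lemma $\ref{2eigenvalues}$ gives $\text{n}(\mathcal{L}_1)=1$ and $\text{z}(\mathcal{L}_1)=1$ with $\ker\mathcal{L}_1=[h']$, and since $\int_0^L h'\,dx=0$ we have $1\perp\ker\mathcal{L}_1$, so $D_1=(\mathcal{L}_1^{-1}1,1)_{L^2_{per}}$ is well defined. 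Thus $(\ref{indexformula12})$--$(\ref{indexformula123})$ reduce the whole statement to proving $D_1<0$, which forces ${\rm n}_0=1$ and ${\rm z}_0=0$.

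The heart of the argument, and the step I expect to be the main obstacle, is the sign of $D_1$. Write $a^2=\tfrac{2k^2}{k^2+1}$ for the squared amplitude of $h$ in $(\ref{existSG1})$. Multiplying $(\ref{travSG1})$ by $h'$ and integrating yields the first integral $\omega(h')^2=\tfrac12 h^4-h^2+a^2-\tfrac12 a^4$ (the constant being evaluated at a point where $h=a$, $h'=0$); combining this with $\omega h''=h^3-h$ gives, after a short computation,
\[
\mathcal{L}_1(h^2)=-\omega(h^2)''-h^2+3h^4=3h^2-(2a^2-a^4),
\]
and since $\mathcal{L}_1(1)=3h^2-1$, subtraction produces the clean identity $\mathcal{L}_1(h^2-1)=(1-a^2)^2$. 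Because $a^2=\tfrac{2k^2}{k^2+1}\in(0,1)$ for $k\in(0,1)$, the right-hand side is a strictly positive constant, so $\Theta:=(1-a^2)^{-2}(h^2-1)$ solves $\mathcal{L}_1\Theta=1$; as two solutions of this equation differ only by a multiple of $h'$ and $\int_0^L h'\,dx=0$, it follows that
\[
D_1=\int_0^L\Theta\,dx=\frac{1}{(1-a^2)^2}\Bigl(\int_0^L h^2\,dx-L\Bigr).
\]
Finally $h(x)^2=a^2\sn^2(bx;k)\le a^2<1$ pointwise, so $\int_0^L h^2\,dx<L$ and therefore $D_1<0$, as required.

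With $D_1<0$ the index identities yield ${\rm n}_0=1$, ${\rm z}_0=0$, whence $\text{n}(\mathcal{L}_{1\Pi})=0$ and $\text{z}(\mathcal{L}_{1\Pi})=1$. Since $\mathcal{L}_1 h'=0$ and $\int_0^L h^2h'\,dx=\tfrac13\int_0^L(h^3)'\,dx=0$, we get $\mathcal{L}_{1\Pi}h'=0$, hence $\ker\mathcal{L}_{1\Pi}=[h']$. Transferring back through the first step, $\mathcal{L}_\Pi$ has no negative eigenvalues and $\ker\mathcal{L}_\Pi=[(h',ch'')]$ --- that is, $(h',ch'')$ is a simple eigenfunction associated with the zero eigenvalue, which is the assertion of the proposition. (It remains only to check that the hypotheses of the index theorem in \cite[Theorem 5.3.2]{kapitula} and \cite[Theorem 4.1]{pel-book} are met, which is immediate, since $\mathcal{L}_1$ is a one-dimensional Schrödinger operator with smooth bounded potential on a compact interval with periodic boundary conditions, hence has compact resolvent and finitely many negative eigenvalues.)
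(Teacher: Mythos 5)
Your proof is correct, and while it follows the same overall skeleton as the paper (reduce to the constrained index count, show $D_1=(\mathcal{L}_1^{-1}1,1)_{L^2_{per}}<0$, apply $(\ref{indexformula12})$--$(\ref{indexformula123})$), the two key steps are carried out by genuinely different means. First, you pass from the matrix operator to the scalar one by completing the square, $(\mathcal{L}(p,q),(p,q))_{\mathbb{L}^2_{per}}=(\mathcal{L}_1p,p)_{L^2_{per}}+\|q-cp_x\|^2_{L^2_{per}}$, and invoking inertia; the paper instead works with the matrix operator directly, taking the two-dimensional constrained space $S=[(1,0),(0,1)]$ and computing the full $2\times2$ matrix $D=\mathrm{diag}(D_1,L)$ in $(\ref{matrixD})$--$(\ref{D123})$. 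Both routes reduce everything to the sign of the same scalar $D_1$. Second — and this is the more substantial difference — you determine that sign by producing the explicit preimage $\mathcal{L}_1(h^2-1)=(1-a^2)^2$ from the ODE $(\ref{travSG1})$ and its first integral, which gives $D_1=(1-a^2)^{-2}\bigl(\int_0^Lh^2\,dx-L\bigr)<0$ immediately from the pointwise bound $h^2\le a^2<1$; I checked that your identity $\mathcal{L}_1(h^2)=3h^2-(2a^2-a^4)$ is correct and that your closed form for $D_1$ agrees exactly with the paper's expression $(\ref{eq9.autovalor})$. The paper instead builds $\mathcal{L}_1^{-1}1$ from the Lam\'e eigenfunctions $f_0,f_4$ and evaluates $D_1$ through the elliptic-integral inequality $(1-k^2)K(k)<E(k)<K(k)$. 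Your argument is shorter, avoids elliptic integrals entirely, and makes the negativity of $D_1$ transparent (it is forced by $|h|<1$); the paper's computation has the minor advantage of yielding the eigenvalues $\lambda_0,\lambda_4$ and an exact value of $D_1$ in terms of $K$ and $E$ as a by-product. The remaining bookkeeping (that $1\perp\ker\mathcal{L}_1=[h']$ so $D_1$ is well defined, that $\mathcal{L}_{1\Pi}h'=0$, and the transfer of the kernel back to $(h',ch'')$) is handled correctly.
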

\begin{proof}

In order to count the negative eigenvalues, it is necessary to note that $\mathcal{L}_{\Pi}$ is the constrained operator $\mathcal{L}$ defined in $\mathbb{L}_{per,m}^2$ with constrained space $$S= [(1,0),(0,1)] \subset \Ker(\mathcal{L})^{\perp}=[(h',ch'')]^\perp,$$ 
such that $\mathcal{L}_{\Pi}\big |_{S^\perp}=\mathcal{L}$. On the other hand, corresponding to the constrained set $S$, we define the matrix
 
 \begin{equation}\label{matrixD}
 	D=\left[\begin{array}{llll}(\mathcal{L}^{-1}(1,0),(1,0))_{\mathbb{L}^2_{per}}& & (\mathcal{L}^{-1}(1,0),(0,1))_{\mathbb{L}^2_{per}}\\\\
 		(\mathcal{L}^{-1}(1,0),(0,1))_{\mathbb{L}^2_{per}}& & (\mathcal{L}^{-1}(0,1),(0,1))_{\mathbb{L}^2_{per}}\end{array}\right].
 	\end{equation}
Since $\mathcal{L}(0,1)=(0,1)$ and $(1,1)=(1,0)+(0,1)$, we have $$\mathcal{L}^{-1}(1,0)=\mathcal{L}^{-1}(1,1)-\mathcal{L}^{-1}(0,1)=\mathcal{L}^{-1}(1,1)-(0,1)$$ and
 \begin{equation}\label{D}
\left( \mathcal{L}^{-1}(1,0), (1,0) \right)_{\mathbb{L}^2_{per}}%= (\tilde{f},1)_{L^2_{per}}  
= ( \mathcal{L}_1^{-1} 1,1)_{L^2_{per}}.%=:D_1.
\end{equation}
Furthermore,
\begin{equation}\label{D12}
		\left( \mathcal{L}^{-1}(1,0), (0,1) \right)_{\mathbb{L}^2_{per}}=\left(\mathcal{L}^{-1}(1,1),(0,1)\right)_{\mathbb{L}^2_{per}}-\left(\mathcal{L}^{-1}(0,1),(0,1)
		\right)_{\mathbb{L}^2_{per}}=L-L=0\end{equation}
and 
\begin{equation}\label{D123}
	\left( \mathcal{L}^{-1}(0,1), (0,1) \right)_{\mathbb{L}^2_{per}}=\left((0,1),(0,1)\right)_{\mathbb{L}^2_{per}}=L.\end{equation}
According to \eqref{D}, \eqref{D12} and \eqref{D123}, it follows that $D$ can be expressed as $$D=\left[\begin{array}{cc}D_1&  0\\
	0&  L\end{array}\right],$$ where $D_1=(\mathcal{L}_{1}^{-1}1,1)_{L_{per}^2}$. The computation of $D_1$ requires finding, since $\Ker(\mathcal{L}_1)=[h']$, an element $\tilde{f}\in H_{per}^2$ satisfying $\mathcal{L}_1\tilde{f}=1$. 
	
	To obtain an appropriate periodic function $\tilde{f}$, we take the following steps: let us consider the first and the fifth eigenvalues of $\mathcal{L}_1$, and their corresponding eigenfunctions, given respectively by 
	\begin{align*}
		\lambda_0=\frac{1+k^2-2\sqrt{1-k^2+k^4}}{1+k^2}, \quad f_0(x)=1-[1+k^2-\sqrt{1-k^2+k^4}]\sn^2(bx;k),
	\end{align*}
	and 
	\begin{align*}
		\lambda_4=\frac{1+k^2+2\sqrt{1-k^2+k^4}}{1+k^2}, \quad f_4(x)=1-[1+k^2+\sqrt{1-k^2+k^4}]\sn^2(bx;k),
	\end{align*}where $ b =\frac{1}{\sqrt{\omega(1+k^2)}}=\frac{4K(k)}{L}$.
	Under these conditions, defining $B_1=(1+k^2+\sqrt{1-k^2+k^4})$ and $B_2 =-(1+k^2-\sqrt{1-k^2+k^4})$, we have that
	\begin{align}\label{eq1.autovalor}
		B_1f_0=&B_1-[(1+k^2)^2-(1-k^2+k^4)]\sn^2(bx;k)= B_1-3k^2\sn^2(bx;k)
	\end{align}
	and
	\begin{align}\label{eq2.autovalor}
		B_2f_4=B_2+[(1+k^2)^2-(1-k^2+k^4)]\sn^2(bx;k)=B_2+3k^2\sn^2(bx;k).
	\end{align}
It follows from \eqref{eq1.autovalor} and \eqref{eq2.autovalor} that 
\begin{align}\label{B1B2}
B_1f_0+B_2f_4=2\sqrt{1-k^2+k^4}. 
\end{align}
\indent Therefore, by the definition of $\lambda_0$ and $\lambda_4$, and using the equality $(\ref{B1B2})$, we deduce 
\begin{equation}\label{eq3.autovalor}
\mathcal{L}_1(\lambda_4 B_1f_0+\lambda_0B_2f_4)= 2\lambda_0\lambda_4\sqrt{1-k^2+k^4}.
\end{equation}
 Since $\lambda_0\lambda_4\sqrt{1-k^2+k^4}$ is nonzero for $k\in (0,1)$, we obtain by \eqref{eq3.autovalor} that $\mathcal{L}_1\tilde{f}=1$, or equivalently, $\tilde{f}=\mathcal{L}_1^{-1}1$, where $\tilde{f}\in  H_{per}^2$ is defined by $$\tilde{f}=\frac{ 1 }{2\lambda_0\lambda_4\sqrt{1-k^2+k^4}}(	\lambda_4 B_1f_0+\lambda_0B_2f_4).$$
Hence, \begin{align}\label{eq4.autovalor}
D_1=(\mathcal{L}_{1}^{-1}1,1)_{L_{per}^2}=(\tilde{f},1)_{L_{per}^2}=\frac{\lambda_4 B_1(f_0,1)_{L_{per}^2}+\lambda_0B_2(f_4,1)_{L_{per}^2}}{2\lambda_0\lambda_4\sqrt{1-k^2+k^4} }.
\end{align}
On the other hand, using the periodicity of the even function $\displaystyle\dn(u+2K(k);k)= \dn(u;k)$ and \cite[Formula 110.07]{byrd}, we obtain the relation  
$\displaystyle
	 	(\sn^2(bx;k),1)_{L_{per}^2}
	 	=\frac{4(K(k)-E(k))}{bk^2},
$
 where $E(k)=\displaystyle\int_0^{\frac{\pi}{2}}\sqrt{1-k^2\sin(\theta)^2}d\theta$ is the complete elliptic integral of the second kind.% defined as $E(k)=\displaystyle\int_0^{\frac{\pi}{2}}\sqrt{1-k^2\sin(\theta)^2}d\theta$.
 \\ Thus, it follows that
\begin{align}
	\label{eq5.autovalor}
	(f_0,1)_{L_{per}^2}=L-\frac{4B_2}{b}\frac{(E(k)-K(k))}{k^2},
\end{align}
and 
\begin{align}
	\label{eq6.autovalor}
	(f_4,1)_{L_{per}^2}=L+\frac{4B_1}{b}\frac{(E(k)-K(k))}{k^2}.
\end{align}
 Therefore, by \eqref{eq5.autovalor}, \eqref{eq6.autovalor}, and the fact that $b=\frac{4K(k)}{L}$, we obtain
  \begin{equation}\label{eq8.autovalor}
 	\lambda_4B_1	(f_0,1)_{L_{per}^2}+\lambda_0 B_2	(f_4,1)_{L_{per}^2}= 6L\sqrt{1-k^2+k^4}+ \frac{12L \sqrt{1-k^2+k^4}}{(1+k^2)}\frac{(E(k)-K(k))}{ K(k)}.
 \end{equation} 
\indent Next, since $2\lambda_0\lambda_4=\frac{-6(1-k^2)^2}{(1+k^2)^2}$, we conclude from \eqref{eq4.autovalor} and \eqref{eq8.autovalor} that
 \begin{equation}\label{eq9.autovalor}\begin{array}{lllll}
 \displaystyle	D_1=(\mathcal{L}_{1}^{-1}1,1)_{L_{per}^2}&=&\displaystyle \frac{6L}{2\lambda_0\lambda_4} + \frac{12L  }{(1+k^2)}\frac{(E(k)-K(k))}{ K(k)2\lambda_0\lambda_4 }\\\\&=&\displaystyle -\frac{L(1+k^2)^2}{(1-k^2)^2} - \frac{2L (1+k^2) }{(1-k^2)^2}\frac{(E(k)-K(k))}{K(k)}\\\\&=&\displaystyle\frac{-L(1+k^2)}{(1-k^2)^2}\bigg\{ (1+k^2)+2 \frac{(E(k)-K(k))}{ K(k)} \biggl \}.\end{array}
 \end{equation}
 By \cite[Formula 710.00 and 710.02]{byrd}, it follows that
  \begin{equation}\label{eq10.autovalor}
 	  (1-k^2)K(k)<E(k)<K(k), \ \mbox{for all}\ k\in (0,1).
 \end{equation}
 Using \eqref{eq9.autovalor} and \eqref{eq10.autovalor} we obtain $	D_1=(\mathcal{L}_{1}^{-1}1,1)_{L_{per}^2}<0$. 
 
% \begin{align}
% 	\lambda_4A+ \lambda_0B=6\sqrt{1-k^2+k^4}
% \end{align}
% 
%  \begin{align}
% 	AB(\lambda_0-\lambda_4)= \frac{12k^2\sqrt{1-k^2+k^4}}{1+k^2}
% \end{align}

\indent Therefore, using $(\ref{indexformula12})$ and $(\ref{indexformula123})$, it follows that
\begin{equation*}
		{\rm{n}}(\mathcal{L}_{1\Pi}) = 	{\rm{n}}(\mathcal{L}_{1})-{\rm{n}}_0-{\rm{z}}_0=1 - 1 - 0 = 0 \quad \mbox{and}\quad  	{\rm{z}}(\mathcal{L}_{1\Pi})={\rm{z}}(\mathcal{L}_{1\ })+{\rm{z}}_0=1+0=1.
\end{equation*}
Associated with the full linear operator $\mathcal{L}_{\Pi}$, we have demonstrated that
\begin{equation*}
	 {\rm{n}}(\mathcal{L}_{\Pi})=	{\rm{n}}(\mathcal{L})-{\rm{n}}_0-{\rm{z}}_0=1-1-0=0  \quad \mbox{and}\quad  {\rm{z}}(\mathcal{L}_{\Pi})={\rm{z}}(\mathcal{L})+{\rm{z}}_0=1+0=1,
\end{equation*}  as requested.
\end{proof}

%Second, the analysis determined in Proposition $\ref{leman1}$ does not require the smoothness of the wave $h$ in terms of $c$ and the analysis in Figure 1 was established in terms of the modulus $k\in (0,1)$.

\section{Orbital stability of periodic waves for the  $\phi^4-$equation}\label{section5}

The goal of this section is to establish a result of orbital stability based on the  theory contained in \cite[Section 4]{Natali2015} (see also \cite{grillakis1}) for the periodic  traveling wave solution $h$ in \eqref{existSG1}  for the  $\phi^4-$equation in \eqref{KF2}.  Consider the restricted energy space $Y= H_{per,m}^1 \times L_{per,m}^2$. It is well known that \eqref{KF2} is invariant by translations. Thus, we can define for $x,s \in \mathbb{R}$ and $U=(u,v) \in Y$   the action
\begin{equation*}%\label{translation}
\mathrm{T}_sU(x)=(u(x+s), v(x+s)). 
\end{equation*}

Next we recall the definition of the orbital stability in this context.

\begin{definition}[Orbital Stability]\label{stadef}
The periodic wave $(h,ch')$ is said to be orbitally stable in $Y$ if for all $\varepsilon>0$ there exists $\delta>0$ with the following property: if
$$
\|(\phi_0, \phi_1)-(h, ch')\|_{Y}<\delta,
$$ 
then the weak solution $\Phi=(\phi,\phi_t)$ of \eqref{KF2} with the initial condition $\Phi(0)=(\phi_0,\phi_1)\in Y$ exists for all $t\geq0$,  and it satisfies
$$
\inf_{s \in \mathbb{R}}\big\|\Phi(t)-\mathrm{T}_s (h,ch') \big\|_Y<\varepsilon,
$$
$\mbox{for all}\ t\geq0.$ Otherwise,  $(h,ch')$ is said to be  orbitally unstable. In particular, this would happen in the case of solutions that blow up in finite time.

\end{definition}

We now prove Theorem $\ref{stabthm}$ as an immediate consequence of the following proposition.

\begin{proposition}\label{propstab}
Let $L\in (0,2\pi)$ be fixed. There exists a constant $C>0$ such that 
\begin{equation}\label{positiveL12}
(\mathcal{L}(p,q),(p,q))_{\mathbb{L}_{per}^2}=(\mathcal{L}_{\Pi}(p,q),(p,q))_{\mathbb{L}_{per}^2}\geq C||(p,q)||_{\mathbb{L}_{per}^2}^2,
\end{equation}
for all $(p,q)\in H_{per,m}^2\times H_{per,m}^1$ such that $((p,q),(h',ch''))_{\mathbb{L}_{per}^2}=0$. In particular, the statement of Theorem $\ref{stabthm}$ is valid. Moreover, we have that $d''(c)<0$, where $d(c)=\mathcal{E}(h,ch')-c\mathcal{F}(h,ch')$.
\end{proposition}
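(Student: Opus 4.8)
The plan is to treat the three assertions of Proposition~\ref{propstab} separately: the operator identity inside \eqref{positiveL12}, the coercive lower bound, and the sign $d''(c)<0$.

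The identity is immediate and needs no orthogonality. From \eqref{opconstrained2}, $\mathcal{L}-\mathcal{L}_\Pi$ is the operator sending $(p,q)$ to $\big(\tfrac{3}{L}\int_0^L h^2 p\,dx,\,0\big)$, the first entry being a constant function; pairing with $(p,q)$ in $\mathbb{L}^2_{per}$ gives $\tfrac{3}{L}\big(\int_0^L h^2 p\,dx\big)\big(\int_0^L p\,dx\big)=0$ because $p$ has zero mean. Hence $(\mathcal{L}(p,q),(p,q))_{\mathbb{L}^2_{per}}=(\mathcal{L}_\Pi(p,q),(p,q))_{\mathbb{L}^2_{per}}$ for every $(p,q)\in H^1_{per,m}\times L^2_{per,m}$, so the equality in \eqref{positiveL12} holds automatically.

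For the coercive estimate I would invoke the spectral information already in hand. By Proposition~\ref{leman1}, $\mathcal{L}_\Pi$, viewed as a self-adjoint operator on $\mathbb{L}^2_{per,m}$ with domain $H^2_{per,m}\times H^1_{per,m}$, is non-negative and $\ker\mathcal{L}_\Pi=[(h',ch'')]$, while the rest of its spectrum is a discrete set of positive eigenvalues (inherited from that of $\mathcal{L}$ via Lemma~\ref{2eigenvalues}), so that $0$ is isolated. This is exactly the situation covered by \cite[Theorem 5.3.2]{kapitula} and \cite[Theorem 4.1]{pel-book}: with $\beta>0$ the smallest positive eigenvalue of $\mathcal{L}_\Pi$, one obtains $(\mathcal{L}_\Pi(p,q),(p,q))_{\mathbb{L}^2_{per}}\ge\beta\,\|(p,q)\|^2_{\mathbb{L}^2_{per}}$ for all $(p,q)\in H^2_{per,m}\times H^1_{per,m}$ with $(p,q)\perp(h',ch'')$, which is \eqref{positiveL12} with $C=\beta$. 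Theorem~\ref{stabthm} then follows by combining this coercivity with the global well-posedness in $Y$ from Theorem~\ref{lwpthm2} and the abstract stability criterion of \cite[Section 4]{Natali2015}.

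It remains to show $d''(c)<0$, where $d(c)=\mathcal{E}(h_c,ch_c')-c\,\mathcal{F}(h_c,ch_c')=\mathcal{G}(h_c,ch_c')$. Since $(h_c,ch_c')$ is a critical point of $\mathcal{G}=\mathcal{E}-c\mathcal{F}$, differentiating in $c$ annihilates the contribution of $\partial_c(h_c,ch_c')$ and leaves $d'(c)=-\mathcal{F}(h_c,ch_c')=-c\int_0^L (h_c')^2\,dx$, so that $d''(c)=-\tfrac{d}{dc}\big(c\int_0^L (h_c')^2\,dx\big)$. I would then make $\int_0^L (h_c')^2\,dx$ explicit: from \eqref{travSG1}, $h_c''=\omega^{-1}(h_c^3-h_c)$, so integration by parts gives $\int_0^L (h_c')^2\,dx=\omega^{-1}\int_0^L(h_c^2-h_c^4)\,dx$, and with the formula \eqref{existSG1} together with the elliptic-integral identities of \cite{byrd} (of the type used around \eqref{eq5.autovalor}) this becomes an explicit function of $k$. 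Reparametrizing through the monotone correspondence $c\leftrightarrow k$ coming from \eqref{speed1} (recall $\tfrac{dk}{d\omega}>0$ and $\tfrac{d\omega}{dc}=-2c$), one verifies the sign using the bounds $(1-k^2)K(k)<E(k)<K(k)$ from \eqref{eq10.autovalor}. The main obstacle is precisely this last verification: the identity and the coercivity are essentially bookkeeping once Proposition~\ref{leman1} is available, whereas the sign of $d''(c)$ requires a careful reduction to complete elliptic integrals and a monotonicity estimate, and it may well be delicate enough that one must exploit the specific $\phi^4$ nonlinearity together with sharp bounds on $K$ and $E$.
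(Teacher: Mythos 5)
Your treatment of the operator identity and of the coercivity estimate is correct and essentially the same as the paper's: the equality of the two quadratic forms is the zero-mean cancellation you describe, and the lower bound follows from Proposition~\ref{leman1} together with the spectral theory of self-adjoint operators with isolated eigenvalues (the paper cites \cite[page 278]{kato} where you cite \cite[Theorem 5.3.2]{kapitula} and \cite[Theorem 4.1]{pel-book}; the substance is identical). The passage to Theorem~\ref{stabthm} via \cite[Section 4]{Natali2015} also matches.

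The gap is in the third assertion. You reduce $d''(c)<0$ to showing $\frac{d}{dc}\bigl(c\int_0^L (h_c')^2\,dx\bigr)>0$ and then propose to make $\int_0^L (h_c')^2\,dx$ explicit in terms of $K(k)$ and $E(k)$ and ``verify the sign'' --- but you never carry out that verification, and you yourself flag it as the main obstacle. Note that the quantity $I(c)=\int_0^L (h_c')^2\,dx$ depends on $c$ only through $\omega=1-c^2$, so the derivative is $I-2c^2\,\frac{dI}{d\omega}$, and controlling the sign of $\frac{dI}{d\omega}$ via the elliptic-integral identities is precisely the kind of computation the paper explicitly sets out to avoid (it remarks that $d''(c)<0$ is obtained ``without using the arguments in \cite[Subsection 4.2]{loreno}''). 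As written, your argument for $d''(c)<0$ is a plan, not a proof. The paper instead closes this step with a soft spectral argument that you already have all the ingredients for: differentiating \eqref{travSG1} in $c$ gives $\mathcal{L}_1\bigl(\tfrac{dh}{dc}\bigr)=-2ch''$, and since $\tfrac{dh}{dc}$ is odd the constrained term vanishes, so $\mathcal{L}_{\Pi}\bigl(\tfrac{dh}{dc},\tfrac{d}{dc}(ch')\bigr)=(-ch'',h')=\mathcal{F}'(h,ch')$. Hence
\begin{equation*}
-d''(c)=\Bigl(\mathcal{L}_{\Pi}\bigl(\tfrac{dh}{dc},\tfrac{d}{dc}(ch')\bigr),\bigl(\tfrac{dh}{dc},\tfrac{d}{dc}(ch')\bigr)\Bigr)_{\mathbb{L}_{per}^2},
\end{equation*}
and writing $\bigl(\tfrac{dh}{dc},\tfrac{d}{dc}(ch')\bigr)=b_0(h',ch'')+(P,Q)$ with $(P,Q)$ in the positive subspace of $\mathcal{L}_{\Pi}$, the kernel direction drops out and the right-hand side equals $(\mathcal{L}_{\Pi}(P,Q),(P,Q))_{\mathbb{L}_{per}^2}>0$ (it cannot vanish, since $\mathcal{L}_{\Pi}\bigl(\tfrac{dh}{dc},\tfrac{d}{dc}(ch')\bigr)=(-ch'',h')\neq 0$). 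This yields $d''(c)<0$ with no elliptic-integral computation; you should either adopt this argument or actually complete the monotonicity estimate you deferred.
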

\begin{proof}

	The first part is an immediate consequence of \cite[page 278]{kato} and the fact that $\mathcal{L}_{\Pi}$ does not have negative eigenvalues. The estimate in $(\ref{positiveL12})$ and the arguments in \cite[Section 4]{Natali2015} are sufficient to conclude the statement of Theorem $\ref{stabthm}$.\\
	\indent  We prove that $d''(c)<0$ is verified without using the arguments in \cite[Subsection 4.2]{loreno}. Indeed, using Proposition $\ref{curveeqSG1}$, we can derive equation $(\ref{travSG1})$ with respect to $c\in(-1,1)$ to get $\mathcal{L}_1\left(\frac{\partial h}{\partial c}\right)=-2ch''$. Thus, since $\frac{\partial h}{\partial c}$ is an odd function for all $c\in(-1,1)$, we obtain that 
\begin{equation}\label{calcL}\mathcal{L}_{1\Pi}\left(\frac{\partial h}{\partial c}\right)=\mathcal{L}_1\left(\frac{\partial h}{\partial c}\right)-\frac{3}{L}\int_0^L(h(x))^2\frac{\partial h(x)}{\partial c}dx=
\mathcal{L}_1\left(\frac{\partial h}{\partial c}\right)=-2ch''.
\end{equation}
\indent Next, a simple calculation using $(\ref{calcL})$ also gives
\begin{equation}\label{calcL1}
\mathcal{L}\left(\frac{\partial h}{\partial c},\frac{\partial}{\partial c}(ch')\right)
=(-ch'',h')=\mathcal{F}'(h,ch'),
\end{equation}
where $\mathcal{F}'$ indicates the Fr\'echet derivative of $\mathcal{F}$ defined in $(\ref{F})$. Similarly as in $(\ref{calcL})$, we also have 
\begin{equation}\label{calcL2}
\mathcal{L}_{\Pi}\left(\frac{\partial h}{\partial c},\frac{\partial}{\partial c}(ch')\right)
=(-ch'',h')=\mathcal{F}'(h,ch').
\end{equation}
\indent On the other hand, since $d(c)=\mathcal{E}(h,ch')-c\mathcal{F}(h,ch')$ and $(h,ch')$ is a critical point of $\mathcal{G}(\phi,\psi)=\mathcal{E}(\phi,\psi)-c\mathcal{F}(\phi,\psi)$, we obtain that $d'(c)=-\mathcal{F}(h,ch')$. Therefore,
\begin{equation}\label{calcL3}\begin{array}{lllll}
d''(c)&=&\displaystyle\left(-\mathcal{F}'(h,ch'),\left(\frac{\partial h}{\partial c},\frac{\partial}{\partial c}(ch')\right)\right)_{\mathbb{L}_{per}^2}\\\\
&=&\displaystyle\left(-\mathcal{L}_{\Pi}\left(\frac{\partial h}{\partial c},\frac{\partial}{\partial c}(ch')\right),\left(\frac{\partial h}{\partial c},\frac{\partial}{\partial c}(ch')\right)\right)_{\mathbb{L}_{per}^2}\\\\
&=&\displaystyle-\frac{\partial }{\partial c}\mathcal{F}(h,ch')=-\frac{\partial }{\partial c}\left(c\int_0^L(h'(x))^2dx\right).
\end{array}
\end{equation}
%By \cite[Subsection 3.3]{Natali2011}), we obtain that $d''(c)=-\frac{d}{dc}\left(c\int_0^L(h'(x))^2dx\right)<0$ and thus, by $(\ref{calcL3})$, we have
%\begin{equation}\label{calcL4}
%\left(\mathcal{L}_{\Pi}\left(\frac{\partial h}{\partial c},\frac{d}{dc}(ch')\right),\left(\frac{\partial h}{\partial c},\frac{d}{dc}(ch)\right)\right)_{\mathbb{L}_{per}^2}>0.
%\end{equation}
\indent Next, in $\mathbb{L}_{per,m}^2$ consider the decomposition
\begin{equation}\label{calcL4}\left(\frac{\partial h}{\partial c},\frac{\partial}{\partial c}(ch')\right)=b_0(h',ch'')+(P,Q),
\end{equation}
where $(P,Q)\in\mathbb{H}_{per,m}^2$ is an element of the positive subspace of $\mathbb{L}_{per,m}^2$, that is, an element that satisfies
 \begin{equation}\label{calcL5}
(\mathcal{L}_{\Pi}(P,Q),(P,Q))_{\mathbb{L}_{per}^2}\geq C||(P,Q)||_{\mathbb{L}_{per}^2}^2,
\end{equation}
for some constant $C>0$. Thus, we have by $(\ref{calcL3})$, $(\ref{calcL4})$, $(\ref{calcL5})$, and some rudimentary calculations
$$
-d''(c)=\displaystyle\left(\mathcal{L}_{\Pi}\left(\frac{\partial h}{\partial c},\frac{\partial }{\partial c}(ch')\right),\left(\frac{\partial h}{\partial c},\frac{\partial }{\partial c}(ch')\right)\right)_{\mathbb{L}_{per}^2}=(\mathcal{L}_{\Pi}(P,Q),(P,Q))_{\mathbb{L}_{per}^2}>0.
$$
This last fact finishes the proof of the proposition.

\end{proof}

\begin{remark}
	The case $\omega < 0$ can be studied. This corresponds to $c > 1$ or $c < -1$, and equation $(\ref{KF3})$ becomes \begin{equation}\label{KF4}-\tau h''+h-h^3=0,\end{equation} where $\tau = -\omega > 0$. This ODE admits two families of periodic wave solutions with cnoidal and dnoidal profiles. It is well known that the cnoidal solution to equation $(\ref{KF4})$ has the zero-mean property, while the dnoidal solution is strictly positive. The problem of orbital instability for cnoidal solutions in the space $H_{per,m}^1\times L_{per,m}^2$ was addressed in \cite{loreno1}. The orbital instability of positive dnoidal waves can be obtained by combining the results in \cite{palacios} (for the case of superluminal waves) with \cite{grillakis1}. In fact, using \cite[Remark 3.2]{palacios}, the period mapping $\mathcal{T}$ associated with the dnoidal waves is strictly increasing in terms of the energy levels. Therefore, the linearized operator $\mathcal{L}$ in $(\ref{matrixop313})$ for dnoidal solutions  has only one negative eigenvalue, which is simple, and zero is a simple eigenvalue associated with the eigenfunction $(h',ch'')$. On the other hand, since $(h,ch')$ is also a critical point of $\mathcal{G}(\phi,\psi)=\mathcal{E}(\phi,\psi)-c\mathcal{F}(\phi,\psi)$, we obtain that $d'(c)=-\mathcal{F}(h,ch')$. Therefore, as in $(\ref{calcL3})$, we obtain $$d''(c)=-\frac{d}{dc}\mathcal{F}(h,ch')=-\frac{d}{dc}\left(c\int_0^L(h'(x))^2dx\right).$$
	A direct computation based on the properties of elliptic functions reveals that $d''(c)<0$ for dnoidal waves when $|c|>1$. Therefore, the Instability Theorem in \cite{grillakis1} establishes that the dnoidal wave $(h,ch')$ is orbitally unstable in the sense of Definition $\ref{stadef}$.
	
\end{remark}

\section{Concluding Remarks} In this paper, we present a different approach to studying the orbital stability of periodic snoidal waves for the well-known $\phi^4-$equation. It is important to mention that the $\phi^4-$equation is set within the Klein-Gordon regime, and that the results in \cite{loreno} and \cite{palacios} are, in some sense, consistent with the celebrated paper \cite{grillakis1} (see Section 5, Example A), where solitary waves are expected to be unstable in the full energy space. The stationary case $c=0$ produces orbitally stable snoidal waves in $H_{per,odd}^1\times L_{per,odd}^2$, as reported in \cite{loreno} and \cite{palacios}, and this is in accordance with the results determined in \cite{martel}. Here, we present the orbital stability in a new energy space $H_{per,m}^1\times L_{per,m}^2$, consisting by periodic functions with zero mean. The reason for this is that the zero-mean condition eliminates negative directions associated with the projected linearized operator restricted to zero-mean perturbations, allowing a refined spectral analysis and providing the orbital stability. Another important feature of our work is its adaptability to other Klein-Gordon equations, such as the sine-Gordon and sinh-Gordon equations (see \cite{Natali2011} for further details). In these cases, global solutions can only be proven in the space $H_{per,m}^1\times L_{per,m}^2$ using the Poincaré-Wirtinger inequality in $(\ref{PW1})$ and an argument similar to that in Remark $\ref{gwp}$. Thus, our results offer a new perspective on the study of periodic waves with the zero-mean property in the context of Klein-Gordon-type equations.

 %\section*{Acknowledgments}

\end{document}